\documentclass[leqno]{article}

\usepackage{amsfonts,amssymb,amsmath,amsthm}
\usepackage[english]{babel}
\setlength{\marginparwidth}{2cm}
\usepackage{algorithm}
\usepackage{algorithmic}
\usepackage{nicefrac}
\usepackage{authblk}
\usepackage[small]{titlesec}
\usepackage{subcaption}
\usepackage[textsize=tiny]{todonotes}

\parskip0.8ex
\setlength{\parindent}{0em}

\oddsidemargin 0.3in
\evensidemargin 1.0in
\textwidth 5.5in
\headheight 1.0in
\textheight 9.0in 
\topmargin -1.5in

\captionsetup{font=small,width=0.9\textwidth}

\newtheorem{theorem}{Theorem}[section]
\newtheorem{definition}[theorem]{Definition}
\newtheorem{lemma}[theorem]{Lemma}
\newtheorem{corollary}[theorem]{Corollary}

\newtheoremstyle{mystyle}
{}
{}
{\normalfont}
{}
{\normalfont\itshape}
{.}
{ }
{}

\theoremstyle{mystyle}
\newtheorem{example}[theorem]{Example}

\numberwithin{equation}{section}

\newcommand{\R}{\mathbb{R}}
\newcommand{\Q}{\mathbb{Q}}
\newcommand{\Z}{\mathbb{Z}}
\newcommand{\N}{\mathbb{N}}

\newcommand{\DD}{U(\sigma)}
\newcommand{\DM}{U(L,l)}

\renewcommand{\d}{\textnormal{d}}
\newcommand{\indodd}[1]{\,#1\;\textnormal{odd}\,}
\newcommand{\indeven}[1]{#1\;\textnormal{even}}

\newcommand{\conv}{\operatorname{conv}}
\newcommand{\cconv}{\overline\conv}

\newcommand{\aei}{a.e.\ in}
\newcommand{\testfunc}[1]{{\cal C}^\infty_c(#1)}
\newcommand{\dext}{\textnormal{ext}}
\newcommand{\dlin}{\textnormal{lin}}
\newcommand{\avn}[2]{{#1}_{[#2]}}
\renewcommand{\mid}{\colon}

\providecommand{\keywords}[1]{\textbf{Keywords.} #1}

\title{\bf Extended Formulations for\\ Binary Optimal Control Problems
  \thanks{This work was partially supported by Deutsche
    Forschungsgemeinschaft under grant no.~BU~2313/7-1. The author
    would like to thank Alexandra Gr\"utering and Christian Meyer for
    many valuable comments that helped to improve this paper
    significantly.}}

\author{Christoph Buchheim}
\affil{Fakultät für Mathematik, Technische Universität Dortmund\\
{\tt christoph.buchheim@math.tu-dortmund.de}}
\date{}

\begin{document}
\maketitle

\begin{abstract}
  Extended formulations are an important tool in polyhedral
  combinatorics. Many combinatorial optimization problems require an
  exponential number of inequalities when modeled as a linear program
  in the natural space of variables. However, by adding artificial
  variables, one can often find a small linear formulation, i.e.,
  one containing a polynomial number of variables and constraints,
  such that the projection to the original space of variables yields
  a perfect linear formulation. Motivated by binary optimal control
  problems with switching constraints, we show that a similar approach
  can be useful also for optimization problems in function space, in
  order to model the closed convex hull of feasible controls in a
  compact way. More specifically, we present small extended
  formulations for switches with bounded variation and for dwell-time
  constraints. For general linear switching point constraints, we
  devise an extended model linearizing the problem, but show that a
  small extended formulation that is compatible with discretization
  cannot exist unless~P$\,=\,$NP.

  \smallskip
  
  \keywords{binary optimal control, switching time optimization,
    convexification, extended formulations}
\end{abstract}

\section{Introduction}\label{sec:intro}

Optimal control problems with discrete-valued control variables are a
rather recent topic in infinite-dimensional optimization. The
joint consideration of ODE- or PDE-constraints with combinatorial
restrictions leads to new challenges and insights both on the optimal
control and on the discrete optimization side. In this paper, we focus
on the setting where a binary control function varies over a
continuous time horizon~$[0,T]$ and assume that a set of admissible
controls~$U$ is given, containing all feasible switching
patterns~$u\colon [0,T]\to\{0,1\}$. Moreover, we assume throughout
that all~$u\in U$ have bounded variation, i.e., that~$u$ switches
between~$0$ and~$1$ only a finite number of times. However, unlike in
other approaches, we do not discretize the problem a priori, so that
every point in~$[0,T]$ remains a potential switching point. A rigorous
formulation of these assumptions requires a formal introduction of the
function spaces in which we model the set~$U$, for this we refer to
Section~\ref{sec:prelim}.

While many approaches presented in the literature for solving optimal
control problems over such a set~$U$ are heuristic and depend on a
predefined discretization~\cite{KMS11,ZS20,ZRS20}, we recently
proposed a branch-and-bound algorithm for solving such problems to
global optimality in function space~\cite{partI,partII,partIII}. The
core of the latter approach is the understanding of the closed convex
hull~$\cconv(U)$ of the feasible set~$U$ and its outer description by
linear inequalities in function space. Depending on the structure
of~$U$, the separation problem for~$\cconv(U)$ may be intractable even
for a given discretization. For two classes of constraints, namely for
the case of bounded variation and for the so-called minimum dwell-time
constraints, we however exploit that the separation problem after
discretization can be solved in polynomial time. Both types of
constraints have been considered in the literature before. The former
constraint just bounds the number of switchings by a
constant~\cite{KMS11,ZS20}, while the latter requires that two
switchings do not follow each other too closely in time~\cite{ZRS20}.

From a combinatorial optimization perspective, this situation suggests
a closer look at the complexity of a given class of switching
constraints. It turns out, however, that a simple extension of the
NP-hardness concept of finite-dimensional optimization to function
spaces is not possible, since the complexity of the discretized
problem may actually depend on the discretization, or, more precisely,
on the number of grid cells used; see Example~\ref{ex:vc}. Nevertheless,
the tractability of relevant classes of switching constraints in
finite dimension paves the way for the transfer of well-studied tools
from discrete optimization to the infinite-dimensional setting.

One such tool are extended formulations. Many combinatorial
optimization problems require an exponential number of inequalities
when modeled as a linear program in the natural space of
variables. However, by adding artificial variables, one can often find
a small linear formulation, i.e., one containing a polynomial number
of variables and constraints, such that the projection to the original
space of variables yields the convex hull of the original feasible
set; see~\cite{CCZ13} for a survey containing both examples and abstract
results. In the present paper, we argue that the same approach can be useful
also for optimization problems in function space. More specifically,
our aim is to devise small extended formulations in function space
for some relevant types of constraints~$U$, such that the projection
to the original space of variables agrees with~$\cconv(U)$; see
Section~\ref{sec:extform} for a precise definition.

For the above-mentioned cases of bounded variation and dwell-time
constraints, we show that such small extended formulations indeed
exist; see Section~\ref{sec:bounded} and Section~\ref{sec:dwell}. Both
proofs are based on corresponding results on the existence of extended
formulations in the finite-dimensional case. Roughly speaking, the
extended formulations in function space can be viewed as limits of the
finite-dimensional formulations when the number of grid cells goes to
infinity. In particular, the formulations are compatible with
discretization in a certain sense. The use of these extended
formulations within the branch-and-bound algorithm of~\cite{partIII}
could be an alternative for the outer approximation algorithm
presented in~\cite{partII}, which requires the repeated dynamic
separation of violated cutting planes and subsequent
re-optimizations. The extended formulations lead to exactly the same
dual bounds without the need of any separation loop.

It follows that, for the two mentioned types of constraints,
discretization leads to extended formulations in finite dimension that
are already known in the literature. However, the advantage of having
at hand an extended formulation in function space, as opposed to the
first-discretize-then-optimize paradigm, is twofold. Firstly, it shows
that ``in the limit'' the model is consistent, so that artifacts
arising only from the discretization are avoided. E.g., it may happen
that all discretizations of an optimal control problem admit optimal
solutions while the original problem in function space does not. This
shows the necessity of having a well-defined model in function space
in the first place before considering its discretizations. Secondly,
discretization leads to a smaller feasible region and thus only allows
to derive a primal bound for the problem in function space. Instead,
an extended formulation in function space could, e.g., be dualized in
function space and then discretized, so that a safe~\emph{dual} bound
could be computed, which would be much more valuable within a
branch-and-bound algorithm or similar approaches.

The situation is different for general switching point constraints as
considered in Section~\ref{sec:general}. Here, we parametrize the
binary control by the finitely many switching points and require that
these switching points satisfy certain linear constraints; this
generalizes the minimum dwell-time constraints. For this class, we show that a
small extended formulation that is compatible with discretization
cannot exist unless~P$\,=\,$NP. This is not only the case for a fixed
discretization, but also when the discretization may be refined. This
result is essentially obtained by showing that the corresponding
sets~$U$ after discretization become intractable, i.e., it is NP-hard
to optimize a given linear objective function over the latter sets.

\section{Function Spaces and Discretization}\label{sec:prelim}

In order to show our main results on the existence of extended
formulations, or even to define the concept of extended formulations
in infinite dimension, we first need to introduce some notation
concerning appropriate function spaces and discretization. However, we
will limit ourselves to essential definitions and observations
here. For more details, we refer the reader to the
monographs~\cite{ALT16,Bre11} or~\cite{ATT14}.

\subsection{The Space $L^2$ and Distributional Derivatives}

Given an open subset~$\Omega\subseteq\R$, we address optimization
problems in the reflexive Banach space~$L^2(\Omega)$ consisting of all
Lebesgue measurable functions~$u\colon\Omega\to\R$ such that~$|u|^2$
is Lebesgue integrable, equipped with the norm
\[
||u||_{L^2(\Omega)}:=\Big(\int_\Omega |u(t)|^2\,\d t\Big)^{\nicefrac 12}\;.
\]
In~$L^2(\Omega)$, two functions are identified if they only differ on
a Lebesgue null set, i.e., elements of~$L^2(\Omega)$ are formally
defined as equivalence classes of functions. In particular, pointwise
evaluations are not well-defined for~$u\in L^2(\Omega)$, and all
constraints on~$u$ discussed in the following can only be required
almost everywhere (a.e.)  in~$\Omega$. Nevertheless, as is common, we
will frequently specify an element~$u\in L^2(\Omega)$ by a pointwise
definition of a representative of the equivalence class~$u$.

We write~$u_n\to u$ if~$u_n$ converges strongly to~$u$
in~$L^2(\Omega)$, i.e., if~$||u-u_n||_{L^2(\Omega)}\to 0$
for~$n\to\infty$. For~$u\in L^2(\Omega)$, the distributional
derivative~$Du$ is the linear functional on $\testfunc\Omega$
defined~by
\[
Du(\varphi):=-\int_\Omega u(t)\varphi'(t)\,\d t\quad\forall
\varphi\in\testfunc\Omega\;,
\]
where~$\testfunc\Omega$ denotes the set of all smooth
functions~$\varphi\colon\Omega\to\R$ with compact support.  If there
exists a function~$w\in L^2(\Omega)$ such that
\[
Du(\varphi)=\int_\Omega w(t)\varphi(t)\,\d t\quad\forall
\varphi\in\testfunc\Omega\;,
\]
it is called the weak derivative of~$u$ and denoted by~$u'$. If~$u$ is
a differentiable function in the classical sense, the weak
derivative~$u'$ agrees with the usual deriative, which follows from
integration by parts.

In the following, we will write~$Du\ge 0$ if
\[
Du(\varphi)\ge 0\quad\forall \varphi\in\testfunc\Omega,\varphi\ge 0\;.
\]
In this case, the function~$u$ is monotonically increasing (outside a
null set). The following observation shows that non-negativity of the
distributional derivative is a closed condition.
\begin{lemma}\label{lem:nndu}
  Let~$u_n\in L^2(\Omega)$ with~$Du_n\ge 0$ for all~$n\in\N$. If~$u_n$
  converges strongly to~$u$ in~$L^2(\Omega)$, then~$Du\ge 0$.
\end{lemma}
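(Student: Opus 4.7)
The plan is to verify the inequality $Du(\varphi) \ge 0$ directly from the definition, for each fixed nonnegative test function $\varphi \in \testfunc\Omega$, by passing to the limit inside the defining integral. Since $\varphi$ has compact support, both $\varphi$ and $\varphi'$ are smooth with compact support, hence in particular $\varphi' \in L^2(\Omega)$.

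First I would fix an arbitrary $\varphi \in \testfunc\Omega$ with $\varphi \ge 0$. By hypothesis $Du_n(\varphi) \ge 0$ for every $n$, which by definition means
\[
-\int_\Omega u_n(t)\varphi'(t)\,\d t \ge 0\quad\text{for all } n\in\N.
\]
Next I would estimate the difference between the $n$-th integral and the limit integral via Cauchy--Schwarz:
\[
\Big|\int_\Omega (u_n(t)-u(t))\,\varphi'(t)\,\d t\Big|
\le \|u_n-u\|_{L^2(\Omega)}\,\|\varphi'\|_{L^2(\Omega)}.
\]
The right-hand side tends to $0$ as $n\to\infty$ because $u_n\to u$ strongly in $L^2(\Omega)$ and $\|\varphi'\|_{L^2(\Omega)}$ is a fixed finite constant.

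Consequently $Du_n(\varphi) \to Du(\varphi)$, and the nonnegativity of each $Du_n(\varphi)$ is preserved in the limit, yielding $Du(\varphi)\ge 0$. Since $\varphi$ was an arbitrary nonnegative test function, this gives $Du\ge 0$, as required.

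There is essentially no obstacle here: the result is a direct consequence of the fact that testing against a fixed $\varphi\in\testfunc\Omega$ defines a continuous linear functional on $L^2(\Omega)$ (continuity even under weak convergence would suffice), and nonnegativity is a closed condition under pointwise limits of scalars. The only point to keep in mind is that one must verify $\varphi'\in L^2(\Omega)$, which is immediate from $\varphi\in\testfunc\Omega$.
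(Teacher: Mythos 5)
Your proof is correct and follows essentially the same route as the paper: fix a nonnegative test function, use Cauchy--Schwarz with $\|\varphi'\|_{L^2(\Omega)}$ fixed to pass the strong $L^2$ convergence through the defining integral, and conclude that the nonnegativity of $Du_n(\varphi)$ survives in the limit. Nothing is missing.
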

\begin{proof}
  Given any fixed~$\varphi\in\testfunc\Omega$, first note that
  \begin{eqnarray*}
    \Big|\int_\Omega u_n(t)(-\varphi'(t))\,\d t-\int_\Omega
    u(t)(-\varphi'(t))\,\d t\Big| & \le &
    ||u_n-u||_{L^2(\Omega)}\cdot||\varphi'||_{L^2(\Omega)}\to 0
  \end{eqnarray*}
  for~$n\to\infty$ since~$||u_n-u||_{L^2(\Omega)}\to 0$ by assumption.
  If~$\varphi\ge 0$, we then have
  \[
  Du(\varphi) = \int_\Omega u(t)(-\varphi'(t))\,\d t =
  \lim_{n\to\infty}\int_\Omega u_n(t)(-\varphi'(t))\,\d t =
  \lim_{n\to\infty}Du_n(\varphi)\ge 0\;,
  \]
  which shows~$Du\ge 0$.
\end{proof}
To conclude this subsection, we mention a technical statement that
will be helpful in some of the following proofs. Here, we use the
Sobolev space~$H^1_0(\Omega)$, which can be defined as the closure
of~$\testfunc{\Omega}$ with respect to the norm
$$||\varphi||_{H^1(\Omega)}:=\big(||\varphi||_{L^2(\Omega)}^2+||\varphi'||_{L^2(\Omega)}^2\big)^{\nicefrac
  12}\;,$$
where~$\varphi'$ denotes the weak derivative of~$\varphi$
defined above.  The latter definition, together with the definition
of~$Du\ge 0$ given above, immediately implies
\begin{lemma}\label{lem:approx_cont}
  Let~$u\in L^2(\Omega)$ with~$Du\ge 0$ and let $\varphi\in
  H^1_0(\Omega)$ such that~$\varphi\ge 0$. Then we have~$-\int_\Omega
  u(t)\varphi'(t)\,\d t\ge 0$.
\end{lemma}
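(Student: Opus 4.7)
The plan is to approximate the given $\varphi \in H^1_0(\Omega)$ by a sequence of non-negative test functions $\psi_n \in \testfunc{\Omega}$ converging to $\varphi$ in the $H^1$-norm, and then to pass to the limit in the identity
$$-\int_\Omega u(t)\psi_n'(t)\,\d t = Du(\psi_n) \ge 0,$$
which holds by assumption because each $\psi_n$ is a non-negative test function. The $L^2$-convergence of $\psi_n'$ to $\varphi'$, combined with Cauchy--Schwarz applied to the difference $\int_\Omega u(t)(\psi_n'(t) - \varphi'(t))\,\d t$, will give the desired inequality in the limit, exactly as in the proof of Lemma~\ref{lem:nndu}.

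The main obstacle is the approximation step. By definition of $H^1_0(\Omega)$ we obtain some sequence $(\chi_n) \subset \testfunc{\Omega}$ with $\chi_n \to \varphi$ in the $H^1$-norm, but these $\chi_n$ need not be non-negative, so $Du(\chi_n) \ge 0$ cannot be invoked directly. The remedy I would use is to pass to positive parts: replace $\chi_n$ by $\chi_n^+ := \max(\chi_n,0)$, which is Lipschitz and compactly supported in $\Omega$, lies in $H^1_0(\Omega)$, and satisfies $\chi_n^+ \to \varphi^+ = \varphi$ in $H^1$ by the standard continuity of the positive part in $H^1$ (Stampacchia's lemma). Then convolve $\chi_n^+$ with a non-negative standard mollifier $\rho_\varepsilon$; for $\varepsilon$ sufficiently small the convolutions $\rho_\varepsilon * \chi_n^+$ are in $\testfunc{\Omega}$, non-negative, and converge to $\chi_n^+$ in $H^1$ as $\varepsilon \to 0$. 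A diagonal argument then produces the required sequence $\psi_n \in \testfunc{\Omega}$ with $\psi_n \ge 0$ and $\psi_n \to \varphi$ in $H^1$.

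With such a sequence at hand, the remainder of the argument is routine: applying $Du \ge 0$ to each $\psi_n$ and using $\psi_n' \to \varphi'$ strongly in $L^2(\Omega)$ yields
$$-\int_\Omega u(t)\varphi'(t)\,\d t = \lim_{n\to\infty} -\int_\Omega u(t)\psi_n'(t)\,\d t = \lim_{n\to\infty} Du(\psi_n) \ge 0.$$
The whole proof is therefore essentially a density statement plus a continuity argument; the only delicate point is that ``density of $\testfunc{\Omega}$ in $H^1_0(\Omega)$'' has to be refined to ``density of non-negative test functions in the non-negative cone of $H^1_0(\Omega)$'', which is precisely what the Stampacchia/mollification combination provides.
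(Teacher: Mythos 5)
Your proof is correct and follows the route the paper intends: the paper offers no written proof at all, merely asserting that the statement ``immediately'' follows from the definition of $H^1_0(\Omega)$ as the closure of $\testfunc{\Omega}$ together with the definition of $Du\ge 0$, i.e.\ exactly the density-plus-continuity argument you carry out. You go further than the paper by supplying the one genuinely non-trivial ingredient it glosses over --- that a non-negative element of $H^1_0(\Omega)$ can be approximated in $H^1$ by \emph{non-negative} test functions, via positive parts and mollification --- which is a worthwhile addition rather than a deviation.
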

For a detailed introduction to Sobolev spaces, we refer the reader to
the literature~\cite{ALT16,ATT14,Bre11}.

\subsection{Functions of Bounded Variation}\label{sec:defs}

Throughout this paper, we will consider functions of bounded variation
defined on~$\Omega$. For a precise definition, consider the seminorm
on~$L^2(\Omega)$ given by
\[
|u|_{BV(\Omega)}:=\sup_{\substack{\varphi\in\testfunc\Omega\\ ||\varphi||_\infty\le 1}}\, \int_\Omega u(t)\varphi'(t)\,\d t\;.
\]
We then define
\[
BV(\Omega):=\{u\in L^2(\Omega)\mid |u|_{BV(\Omega)}<\infty\}\;.
\]
The distributional derivative~$Du$ of a function~$u\in BV(\Omega)$ can
be represented by a finite signed regular Borel measure
on~$\Omega$. More formally, for each~$u\in BV(\Omega)$ there exists
such a measure~$\mu$ with
\[
Du(\varphi)=\int_\Omega \varphi\,\d \mu\quad\forall
\varphi\in\testfunc\Omega\;,
\]
which we will denote by~$\partial u$ in the following. The Jordan
decomposition theorem then allows to write~$\partial u=(\partial
u)^+-(\partial u)^-$, where~$(\partial u)^+$ and~$(\partial u)^-$ are
non-negative measures on~$\Omega$, and this decomposition is unique.
Setting~$|\partial u|:=(\partial u)^++(\partial u)^-$ and
using~\cite[Theorem~6.26]{ALT16} we obtain $|u|_{BV(\Omega)}=|\partial
u|(\Omega)$, i.e., the variation of~$u$ is the total variation of the
measure~$\partial u$.

In our proofs, we will make extensive use of~$u^+,u^-\in L^2(\Omega)$
defined by
\[
u^+(t):=(\partial
u)^+\big(\Omega\cap(-\infty,t]\big),~u^-(t):=(\partial
  u)^-\big(\Omega\cap(-\infty,t]\big)\;.
\]
Since~$u^+$ is Lebesgue measurable and $||u^+||_{L^\infty(\Omega)}\le
(\partial u)^+(\Omega)$, we can use Fubini's theorem and obtain
\begin{eqnarray*}
(Du^+)(\varphi) & = & -\int_\Omega u^+(t)\varphi'(t)\,\d t =
  -\int_\Omega\Big(\int_\Omega\chi_{(-\infty,t]}(\tau)\,\d(\partial
    u)^+(\tau)\Big)\varphi'(t)\,\d t\\ & = & -\int_\Omega
    \Big(\int_\Omega
    \underbrace{\chi_{(-\infty,t]}(\tau)}_{=\chi_{[\tau,\infty)}(t)}\varphi'(t)\,\d
        t\Big)\,\d(\partial u)^+(\tau) = \int_\Omega
        \varphi(\tau)\,\d(\partial u)^+(\tau)
\end{eqnarray*}
for all~$\varphi\in\testfunc{\Omega}$, so that~$Du^+$ is represented
by the measure~$(\partial u)^+$ and analogously~$Du^-$ is represented
by~$(\partial u)^-$.  Moreover, it follows that
$|u^+|_{BV(\Omega)}+|u^-|_{BV(\Omega)}=|u|_{BV(\Omega)}$ and
hence~$u^+,u^-\in BV(\Omega)$ with $\partial(u^+)=(\partial u)^+\ge
\partial u$ and $\partial(u^-)=(\partial u)^-\ge -\partial u$.

In this paper, we are mostly interested in functions~$u\in BV(\Omega)$
with $u(t)\in\{0,1\}$ for almost all~$t\in \Omega$. Up to null sets,
these functions can thus be viewed as binary switches that change only
a finite number of times in~$\Omega$. In our formulations, we will
assume moreover that the considered time horizon is~$[0,T]$,
for~$T\in\Q_+$, and that the switch is zero at the beginning of this
time horizon. For modeling this, given any~$S\subseteq\R$, we
introduce the notation
\[
BV_{\star}(0,T;S):=\{u\in BV(-\infty,T)\mid u=0\text{
  \aei\ }(-\infty,0),~u\in S\text{ \aei\ }(0,T)\}\;.
\]
Here and in the following, we shortly write that $u$ has some property
\aei\ $\Omega$ if $u(t)$ has this property for almost all
$t\in\Omega$.  Now $u\in BV_{\star}(0,T;S)$ ensures that~$\partial
u\big((-\infty,0)\big)=0$ and
\[
|u|_{BV(-\infty,T)}=|\partial u|(\{0\})+|u|_{BV(0,T)}
\]
for all~$u\in L^2(-\infty,T)$, i.e., the initial value of~$u$ in the
time horizon~$[0,T]$ is taken into account. Analogously, we will use
the notation
\[
L^2_{\star}(0,T;S):=\{u\in L^2(-\infty,T)\colon u=0\text{
  \aei\ }(-\infty,0),~u\in S\text{ \aei\ }(0,T)\}
\]
and shortly write~$L^2_{\star}(0,T)$ and~$BV_{\star}(0,T)$
for~$L^2_{\star}(0,T;\R)$ and~$BV_{\star}(0,T;\R)$, respectively. By
these definitions, each function~$u\in BV_{\star}(0,T)$ now has a
representative~$t\mapsto \partial u\big([0,t]\big)$.

\subsection{Discretization and Approximation by Piecewise Constant Functions}\label{sec:app}

Since we are mostly interested in binary functions, a natural approach
for discretization uses piecewise constant functions. For simplicity,
we will concentrate on equidistant grids throughout the paper. Given a
number~$N\in\N$ of grid cells and~$v\in\R^N$, let~$\overline v\in
L^2_{\star}(0,T)$ be the piecewise constant function defined by
\[
\overline v= v_i\text{ on }\big((i-1)\tfrac TN,i\tfrac
TN\big),~i=1,\dots,N\;.
\]
Conversely, given~$u\in L^2_{\star}(0,T)$, let~$u_N\in\R^N$ be defined
by the piecewise averages
\[
(u_N)_i:=\tfrac NT\int_{(i-1)\frac TN}^{i\frac TN} u(t)\,\d t
\]
for~$i=1,\dots,N$. Clearly, we then have~$(\overline v)_N=v$ for
all~$v\in\R^N$. For the following, we further introduce the
notation~$\avn uN:=\overline{{\scriptstyle (}u_N{\scriptstyle )}}\in
L^2_{\star}(0,T)$ for~$u\in L^2_{\star}(0,T)$, i.e.,~$\avn uN$ arises
from~$u$ by replacing its function values by their piecewise averages
on the intervals~$\big((i-1)\tfrac TN,i\tfrac TN\big)$.
\begin{definition}\label{def:disc}
  Let~$U\subseteq L^2_{\star}(0,T)$ and~$N\in\N$. Then the
  \emph{discretization} $U_N$ of~$U$ is defined as the subset $U\cap
  \{\bar u\mid u\in\R^N\}$ of~$U$.
\end{definition}
By this definition, the discretization~$U_N$ consists of piecewise
constant functions. However, we will sometimes identify~$U_N$ with a
subset of~$\R^N$, namely via the correspondence between a
vector~$v\in\R^N$ and the piecewise constant function~$\bar v$.
\begin{lemma}\label{lem:convdisc}
  For every~$U\subseteq L^2_{\star}(0,T)$, we
  have~$\conv(U_N)\subseteq \conv(U)_N$.
\end{lemma}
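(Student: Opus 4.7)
The plan is to prove the inclusion by a direct unpacking of the definitions: take an arbitrary element of $\conv(U_N)$ and verify that it lies in both $\conv(U)$ and the set of piecewise constant functions on the grid with $N$ cells.

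First I would pick some $f\in\conv(U_N)$. By definition of the convex hull, I can write $f=\sum_{i=1}^k \lambda_i \bar v^{(i)}$ for finitely many $\bar v^{(i)}\in U_N$, nonnegative coefficients $\lambda_i$ summing to one, and corresponding vectors $v^{(i)}\in\R^N$. Since $U_N\subseteq U$ by Definition~\ref{def:disc}, the representation immediately shows $f\in\conv(U)$.

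The key observation is then that the set $\{\bar u \mid u\in\R^N\}\subseteq L^2_\star(0,T)$ of piecewise constant functions with respect to the equidistant grid of $N$ cells is a linear subspace of $L^2_\star(0,T)$, hence in particular closed under convex combinations. Concretely, setting $v:=\sum_{i=1}^k \lambda_i v^{(i)}\in\R^N$, the function $f$ coincides a.e.\ with $\bar v$, because on each interval $\bigl((j-1)\tfrac{T}{N},j\tfrac{T}{N}\bigr)$ the value of $f$ is $\sum_i\lambda_i v^{(i)}_j = v_j$. Consequently $f\in\{\bar u\mid u\in\R^N\}$, and combining this with $f\in\conv(U)$ yields $f\in\conv(U)\cap\{\bar u\mid u\in\R^N\}=\conv(U)_N$, again by Definition~\ref{def:disc}.

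There is no real obstacle here; the statement is essentially bookkeeping. The only subtlety worth mentioning is that one needs to use the same underlying grid (with $N$ cells) for all the $\bar v^{(i)}$, which is automatic since they all belong to $U_N$, so that the convex combination remains piecewise constant on that same grid rather than on some refinement.
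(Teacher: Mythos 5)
Your proof is correct and follows essentially the same route as the paper's: write the element as a convex combination of members of $U_N\subseteq U$ to get membership in $\conv(U)$, and observe that the convex combination stays piecewise constant on the same grid. The only difference is that you spell out the latter point explicitly, whereas the paper absorbs it into the inclusion $\conv(U_N)\subseteq L^2_{\star}(0,T)_N$.
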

\begin{proof}
  Let~$u\in\conv(U_N)\subseteq L^2_{\star}(0,T)_N$ and choose
  $\lambda_i\ge 0$ and $u^{i}\in U_N\subseteq U$ for~$i=1,\dots,k$
  such that~$\sum_{i=1}^k\lambda_i=1$ and~$u=\sum_{i=1}^k
  \lambda_iu^{i}$. Then~$u\in\conv(U)$ and hence~$u\in\conv(U)_N$.
\end{proof}
Note that~$\conv(U_N)\neq\conv(U)_N$ in general. For a simple example,
let~$U=\{\chi_{[0,1]},\chi_{[1,2]}\}$ with~$T=2$, where~$\chi_A$
denotes the characteristic function
of~$A\subseteq\R$. Then~$U_N=\emptyset$ for all odd~$N\in\N$, since
each~$u\in U_N$ must be constant on~$[\tfrac{N-1}N,\tfrac {N+1}N]$,
but then~$u\not\in U$. On the other hand, we have~$\tfrac
12\chi_{[0,2]}\in \conv(U)_N$ for all~$N$.

The following observation is a central ingredient in our proofs
presented in the subsequent sections. It shows that a function~$u\in
L^2(0,T)$ can be approximated by its piecewise averages provided
that~$u$ has bounded variation; see Figure~\ref{fig:app} for an
illustration.
\begin{figure}
  \begin{center}
    \begin{tikzpicture}[scale=1.5]
      \draw[->,color=gray!75] (0,0) -- (0,1);
      \draw[->,color=gray!75] (0,0) -- (3,0);
      \draw[thick,domain=0:1,color=blue] plot (\x,{\x*\x});
      \draw[thick,domain=1:1.4,color=blue] plot (\x,{0.8});
      \draw[thick,domain=1.4:2,color=blue] plot (\x,{0.6});    
      \draw[thick,domain=2:3,color=blue] plot (\x,{0.5+\x/7*sin(2*3.14159*(\x-1.375) r))});
    \end{tikzpicture}
    \qquad
    \begin{tikzpicture}[scale=1.5]
      \draw[domain=0:1,color=blue!50] plot (\x,{\x*\x});
      \draw[domain=1:1.4,color=blue!50] plot (\x,{0.8});
      \draw[domain=1.4:2,color=blue!50] plot (\x,{0.6});    
      \draw[domain=2:3,color=blue!50] plot (\x,{0.5+\x/7*sin(2*3.14159*(\x-1.375) r))});
      \foreach \i / \j in {0/0.1875,1/0.7636,2/0.4756,3/0.6125} {
        \draw[color=gray,dotted,thick] (\i*3/4,0) -- (\i*3/4,1);
        \draw[color=black,thick] (\i*3/4,\j) -- (\i*3/4+3/4,\j);
      }
      \draw[color=gray,dotted,thick] (3,0) -- (3,1);
    \end{tikzpicture}
    \\[3ex]
    \begin{tikzpicture}[scale=1.5]
      \draw[domain=0:1,color=blue!50] plot (\x,{\x*\x});
      \draw[domain=1:1.4,color=blue!50] plot (\x,{0.8});
      \draw[domain=1.4:2,color=blue!50] plot (\x,{0.6});    
      \draw[domain=2:3,color=blue!50] plot (\x,{0.5+\x/7*sin(2*3.14159*(\x-1.375) r))});
      \foreach \i / \j in {0/0.0469,1/0.3281,2/0.7806,3/0.7467,4/0.6000,5/0.3511,6/0.6129,7/0.6121} {
        \draw[color=gray,dotted,thick] (\i*3/8,0) -- (\i*3/8,1);
        \draw[color=black,thick] (\i*3/8,\j) -- (\i*3/8+3/8,\j);
      }
      \draw[color=gray,dotted,thick] (3,0) -- (3,1);
    \end{tikzpicture}
    \qquad
    \begin{tikzpicture}[scale=1.5]
      \draw[domain=0:1,color=blue!50] plot (\x,{\x*\x});
      \draw[domain=1:1.4,color=blue!50] plot (\x,{0.8});
      \draw[domain=1.4:2,color=blue!50] plot (\x,{0.6});    
      \draw[domain=2:3,color=blue!50] plot (\x,{0.5+\x/7*sin(2*3.14159*(\x-1.375) r))});
      \foreach \i / \j in {0/0.0117,1/0.0820,2/0.2227,3/0.4336,4/0.7148,5/0.8463,6/0.8000,7/0.6933,8/0.6000,9/0.6000,10/0.4867,11/0.2155,12/0.4409,13/0.7850,14/0.8032,15/0.4211} {
        \draw[color=gray,dotted,thick] (\i*3/16,0) -- (\i*3/16,1);
        \draw[color=black,thick] (\i*3/16,\j) -- (\i*3/16+3/16,\j);
      }
      \draw[color=gray,dotted,thick] (3,0) -- (3,1);
    \end{tikzpicture}
  \end{center}
  \caption{Approximation of a function of bounded variation by its
    piecewise averages according to Lemma~\ref{lem:approx}: a given
    function~$u\in BV_{\star}(0,T)$ (upper left) and the piecewise averages
    $u_{[4]}$, $u_{[8]}$, and~$u_{[16]}$.}
  \label{fig:app}
\end{figure}
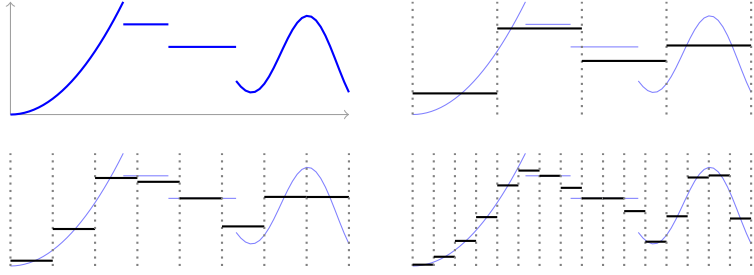
\begin{lemma}\label{lem:approx}
  Let~$u\in BV_{\star}(0,T)$. Then the piecewise averages~$\avn uN$
  strongly converge to~$u$ in~$L^2_{\star}(0,T)$ for~$N\to\infty$.
\end{lemma}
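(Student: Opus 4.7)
The plan is to reduce, via the Jordan decomposition recalled in Section~\ref{sec:defs}, to the case of a monotone non-decreasing function, and then to bound the averaging error directly in terms of the total variation of~$u$.

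First I would observe that both $u$ and $\avn uN$ vanish almost everywhere on $(-\infty,0)$, so it suffices to show $||u-\avn uN||_{L^2(0,T)}\to 0$. Writing $u=u^+-u^-$ with $u^+,u^-\in BV_{\star}(0,T)$ non-decreasing, and using linearity of the averaging operator $u\mapsto\avn uN$, the proof reduces to the case where $u$ is bounded and non-decreasing on $(0,T)$; I would work with its canonical right-continuous representative, which is available because $u^+$ and $u^-$ are distribution functions of the finite non-negative Borel measures $(\partial u)^+$ and $(\partial u)^-$.

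For such a monotone $u$, on each grid cell $I_i:=\bigl((i-1)\tfrac TN,i\tfrac TN\bigr)$ the piecewise average $(u_N)_i$ lies between $\inf_{I_i}u$ and $\sup_{I_i}u$, so the pointwise error on $I_i$ is bounded by the oscillation $\omega_i:=\sup_{I_i}u-\inf_{I_i}u$. Monotonicity lets the $\omega_i$ telescope up to the non-negative jumps at the grid points, giving both $\sum_i\omega_i\le|u|_{BV(-\infty,T)}=:V$ and $\max_i\omega_i\le V$. Integrating $|u-\avn uN|^2\le\omega_i^2$ on each cell and summing would then yield
\[
||u-\avn uN||_{L^2(0,T)}^2\le\tfrac TN\sum_{i=1}^N\omega_i^2\le\tfrac TN\bigl(\max_i\omega_i\bigr)\sum_{i=1}^N\omega_i\le\tfrac{TV^2}{N}\,,
\]
which tends to zero as $N\to\infty$.

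The only real pitfall is a bookkeeping one rather than a deep one: the ``pointwise'' estimate $|u-(u_N)_i|\le\omega_i$ on $I_i$ has to be justified using a concrete right-continuous representative rather than the $L^2$-equivalence class, because a monotone $BV$-function can have countably many jumps. Once that representative is fixed, the remaining computation is routine. An alternative route, avoiding the Jordan decomposition entirely, would combine density of $\testfunc{(0,T)}$ in $L^2$ with the fact that $u\mapsto\avn uN$ is an $L^2$-orthogonal projection (hence a non-expansion); the BV-based argument above has the side benefit of yielding the explicit convergence rate $O(N^{-1/2})$.
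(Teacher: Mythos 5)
Your argument is correct, and it follows the same overall strategy as the paper's proof: bound the error on each grid cell $I_i$ in the sup norm by the local variation of $u$ on $I_i$, then sum $\tfrac TN\sum_i(\cdot)^2\le\tfrac TN\,|u|_{BV(-\infty,T)}^2$ to get the $O(N^{-1/2})$ rate. The only real difference is how you obtain the cellwise estimate: you first split $u=u^+-u^-$ via the Jordan decomposition and argue by monotonicity that the average lies between $\inf_{I_i}u$ and $\sup_{I_i}u$, so the error is at most the oscillation $\omega_i$, which telescopes. The paper instead works directly with the signed measure $\partial u$ and the representative $t\mapsto\partial u\big([0,t]\big)$, writing $u(t)-(u_N)_i$ as an average of increments $\partial u\big((\tau,t]\big)$ and bounding these by $|\partial u|(I_i)=|u|_{BV(I_i)}$; this avoids the case split and the triangle inequality over the two monotone parts (which, as you should note explicitly, costs you a factor that fortunately still sums to $|u|_{BV(-\infty,T)}$ since $|u^+|_{BV}+|u^-|_{BV}=|u|_{BV}$). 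Your closing remark is also sound: $u\mapsto\avn uN$ is the $L^2(0,T)$-orthogonal projection onto piecewise constants, so density of continuous functions gives convergence for \emph{every} $u\in L^2_{\star}(0,T)$ without any $BV$ assumption, at the price of losing the explicit rate; the paper's $BV$-based estimate is what gets reused quantitatively elsewhere. You are right that the one point requiring care is fixing a concrete (right-continuous) representative before making pointwise statements; the paper handles this identically.
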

\begin{proof}
  Given $N\in\N$, we first claim that
    $||u-(u_N)_i||_{L^\infty(I_i)}\le |u|_{BV(I_i)}$ for all~$i=1,\dots,N$,
    where we set~$I_i:=\big((i-1)\frac TN,i\frac TN)$.
    Indeed, for almost all $t\in I_i$ we have
    \begin{eqnarray*}
      |u(t)-(u_N)_i|
      & = & \tfrac NT\, \Big|\int_{(i-1)\frac TN}^t\underbrace{u(t)-u(\tau)}_{=\partial u\left((\tau,t]\right)}\,\d\tau+
    \int_{t}^{i\frac TN} \underbrace{u(t)-u(\tau)}_{=-\partial u\left((t,\tau]\right)}\,\d\tau
    \Big|\\
    & \le & \tfrac NT\, \Big(\int_{(i-1)\frac
      TN}^t\underbrace{|\partial u|\big((\tau,t]\big)}_{\le |\partial u|(I_i)}\,\d\tau+
    \int_{t}^{i\frac TN} \underbrace{|\partial u|\big((t,\tau]\big)}_{\le|\partial u|(I_i)}\,\d\tau
    \Big)\\ 
    & \le & |\partial u|(I_i) = |u|_{BV(I_i)}\;,
    \end{eqnarray*}
    where $\partial u\big((t_1,t_2]\big)=\partial
      u\big([0,t_2]\big)-\partial u\big(([0,t_1]\big)=u(t_2)-u(t_1)$
      for almost all~$t_1,t_2\in I_i$, $t_1<t_2$ using the
      representative~$t\mapsto \partial u\big([0,t]\big)$ of~$u$
      mentioned above.  We now derive
  \begin{eqnarray*}
    ||u-\avn uN||_{L^2(0,T)}^2 & = & \int_0^T |u(t)-\avn uN(t)|^2\,\d t\\
    & = & \sum_{i=1}^N\int_{I_i} |u(t)-(u_N)_i|^2\,\d t\\
    & \le & \sum_{i=1}^N \tfrac TN ||u-(u_N)_i||_{L^\infty(I_i)}^2\\
    & \le & \sum_{i=1}^N\tfrac TN |u|_{BV(I_i)}^2
    ~ \le ~ \tfrac TN|u|_{BV(-\infty,T)}^2\;,
  \end{eqnarray*}
  and the latter expression converges to zero for~$N\to\infty$
  since~$|u|_{BV(-\infty,T)}$ is finite. The first inequality in this
  chain follows from Hölder's inequality, the second from the claim
  shown above, and the last one from the fact that the intervals~$I_i$
  form a partition of~$(0,T)$.
\end{proof}
  
We finally note that, if~$Du\ge 0$, the monotonicity of~$u$ is also
reflected in the discretization:
\begin{lemma}\label{lem:mono}
  Let~$u\in L^2_{\star}(0,T)$ with~$Du\ge 0$. Then $(u_N)_N\ge
  (u_N)_{N-1}\ge\dots\ge (u_N)_1\ge 0$.
\end{lemma}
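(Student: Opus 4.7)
The plan is to realise each of the claimed inequalities as $-\int u\varphi'\,\d t\ge 0$ for a suitable non-negative test function $\varphi$, using Lemma~\ref{lem:approx_cont} to extend the definition of $Du\ge 0$ from $\varphi\in\testfunc{(-\infty,T)}$ to $\varphi\in H^1_0(-\infty,T)$. Specifically, for each $i\in\{1,\dots,N-1\}$, I would introduce the piecewise affine ``tent'' function $\varphi_i\in H^1_0(-\infty,T)$ with peak height $T/N$ at $iT/N$ and support $[(i-1)T/N,(i+1)T/N]$; its weak derivative satisfies $\varphi_i'=+1$ on the left half of the support and $\varphi_i'=-1$ on the right half. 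A direct computation then gives
\[
-\int_{-\infty}^T u(t)\varphi_i'(t)\,\d t = -\int_{(i-1)T/N}^{iT/N}u\,\d t+\int_{iT/N}^{(i+1)T/N}u\,\d t = \tfrac{T}{N}\bigl((u_N)_{i+1}-(u_N)_i\bigr),
\]
and Lemma~\ref{lem:approx_cont} yields $(u_N)_{i+1}\ge(u_N)_i$. Iterating over~$i$ produces the chain $(u_N)_N\ge\dots\ge(u_N)_1$.

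For the remaining inequality $(u_N)_1\ge 0$, the same argument is applied to an analogous tent function $\varphi_0\in H^1_0(-\infty,T)$ centred at $0$ with support $[-T/N,T/N]$. Since $u=0$ a.e.\ on $(-\infty,0)$, the contribution from the left half of the support vanishes, leaving $-\int u\varphi_0'\,\d t=(T/N)(u_N)_1$, which is non-negative again by Lemma~\ref{lem:approx_cont}.

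The only point that requires care is verifying that each $\varphi_i$ (and $\varphi_0$) really lies in $H^1_0(-\infty,T)$: each is continuous, non-negative, piecewise affine, compactly supported in $\overline{(-\infty,T)}$, and vanishes at the only boundary point~$T$, so a standard cutoff-plus-mollification argument provides approximations in $\testfunc{(-\infty,T)}$ converging in $H^1$-norm. I do not anticipate any substantial obstacle beyond this book-keeping; the proof is essentially a matched pair of computations engineered so that the derivative of the test function picks out exactly the difference of two consecutive piecewise averages.
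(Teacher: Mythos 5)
Your proof is correct and is essentially the paper's own argument: the paper's test function $\varphi_{i-1}-\varphi_i$ (a difference of two ramp functions) is exactly your tent function with peak height $T/N$, and both proofs feed these non-negative $H^1_0(-\infty,T)$ functions into Lemma~\ref{lem:approx_cont} to extract the differences of consecutive averages, with the base case $(u_N)_1\ge 0$ handled identically via the tent straddling $0$ and the fact that $u$ vanishes a.e.\ on $(-\infty,0)$.
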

\begin{proof}
  For~$i=0,\dots,N$, define~$\varphi_i\colon\R\to\R$ by
  \[
  \varphi_i(t)=\begin{cases}
  \begin{array}{ll}
    0 & \text{for }t\in (-\infty,(i-1)\tfrac TN]\;,\\ t-(i-1)\tfrac TN
      & \text{for }t\in [(i-1)\tfrac TN,i\tfrac TN]\;,\\ \tfrac TN &
      \text{for }t\in[i\tfrac TN,\infty)\;.
  \end{array}
  \end{cases}
  \]
  Then, for~$i=1,\dots,N$, the function~$\varphi_{i-1}-\varphi_{i}$ is
  non-negative and belongs to~$H^1_0(-\infty,T)$, since it vanishes
  at~$-\tfrac TN$ and~$T$.  Using $Du\ge 0$,
  Lemma~\ref{lem:approx_cont} now yields
  \[
  \int_{-\infty}^T(\varphi_i'(t)-\varphi_{i-1}'(t))u(t)\,\d t\ge 0\,.
  \]
  This implies
  \[
  (u_N)_i = \tfrac NT\int_{-\infty}^T \varphi'_i(t)u(t)\,\d t \ge
  \tfrac NT\int_{-\infty}^N \varphi'_{i-1}(t)u(t)\,\d t
  \]
  and the latter integral is~$(u_N)_{i-1}$ for~$i=2,\dots,N$ and zero
  for~$i=1$.
\end{proof}

\section{Extended Formulations}\label{sec:extform}

Our aim is to devise extended formulations for binary switching
constraints on a continuous time horizon~$[0,T]$. More precisely, we
investigate specific subsets~$U\subseteq BV_{\star}(0,T;\{0,1\})$ that are
bounded in~$BV_{\star}(0,T;\{0,1\})$, meaning that there exists a uniform
bound~$\sigma\in\Q_+$ such that~$|u|_{BV(-\infty,T)}\le\sigma$ for
all~$u\in U$. Using the lower semi\-continuity
of~$|\cdot|_{BV(-\infty,T)}$, one can show that this assumption
guarantees that~$\overline{\conv}(U)$ is still contained
in~$BV_{\star}(0,T)$. Here and in the following, all closures are taken
in~$L^2_{\star}(0,T)$.
\begin{definition}\label{def:extform}
  An \emph{extended formulation} of~$U\subseteq BV_{\star}(0,T)$ is a
  set~$U^\dext\subseteq L^2_{\star}(0,T)^{d+1}$ satisfying the
  following conditions:
  \begin{itemize}
  \item[(e1)] The projection of~$U^\dext$ to the first
    coordinate agrees with~$\cconv(U)$.
  \item[(e2)] The formulation is \emph{linear}, i.e., the
    elements~$(u,z_1,\dots,z_d)\in U^\dext$ can be characterized
    by finitely many constraints, each of which is either of the form
    \[
    L(u,z_1,\dots,z_d)\ge 0\text{ \aei\ }(0,T)
    \]
    or of the form
    \[
    DL(u,z_1,\dots,z_d)\ge 0
    \]
    for a continuous affine-linear
    operator $L\colon L^2_{\star}(0,T)^{d+1}\to L^2_{\star}(0,T)$.
  \end{itemize}
  The extended formulation is called \emph{small} if the number of
  controls~$d+1$ as well as the number of constraints
  describing~$U^\dext$ according to~(e2) are polynomial in the input size.
\end{definition}
Herein, the projection of~$U^\dext$ to the first coordinate is defined
as the set
\[
\{u\in L^2_{\star}(0,T)\mid \exists z_1,\dots,z_d\in
L^2_{\star}(0,T)\text{ s.t.\ }(u,z_1,\dots,z_d)\in U^\dext\}\;.
\]
Polynomiality in the input size is a very common requirement in
combinatorial optimization. Nonetheless, in the infinite-dimensional
setting, it calls for some clarification. Obviously, the concept is
only well-defined when the sets~$U$ under consideration belong to a
class that can be finitely parametrized, i.e., an individual
instance~$U$ from the class is fully determined by a finite input.  To
emphasize this, all input parameters except for~$T$ will be part of
the notation for all problem classes investigated in the subsequent
sections.

In finite dimension, extended formulations of polynomial size are
usually called ``compact''. However, in our context, this term would
be ambiguous, since it could also refer to the compactness of the
set~$U^\dext$ in the Banach space~$L^2_{\star}(0,T)^{d+1}$. For this
reason, we use the notion of smallness instead of compactness
throughout this paper.

The next result shows that
every extended formulation is closed in~$L^2_{\star}(0,T)^{d+1}$. Moreover,
the projection in Definition~\ref{def:extform} is always closed if the
extended formulation is bounded.
\begin{lemma}\label{lem:convclosed}
  Every extended formulation~$U^\textnormal{ext}$ is convex and closed
  in~$L^2_{\star}(0,T)^{d+1}$. Moreover, if~$U^\dext$ is bounded
  in~$L^2_{\star}(0,T)^{d+1}$, then the projection of~$U^\dext$ to the first
  coordinate is convex and closed in~$L^2_{\star}(0,T)$.
\end{lemma}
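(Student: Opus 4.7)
The plan is to verify convexity and closedness of $U^\dext$ constraint by constraint, and then to derive closedness of the projection by exploiting reflexivity of $L^2_{\star}(0,T)^{d+1}$ together with Mazur's theorem; convexity of the projection itself will be immediate.

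For $U^\dext$, a finite intersection of convex closed sets is convex and closed, so it suffices to check each of the two constraint types in~(e2) individually. A constraint of the first form $L(u,z_1,\dots,z_d)\ge 0$ a.e.\ in $(0,T)$ defines the preimage, under the continuous affine-linear operator~$L$, of the convex cone $\{w\in L^2_{\star}(0,T)\mid w\ge 0\text{ a.e.}\}$ and hence is convex. For closedness, I would take $(u_n,z_{1,n},\dots,z_{d,n})\to(u,z_1,\dots,z_d)$ strongly in $L^2_{\star}(0,T)^{d+1}$ with $L(u_n,z_{1,n},\dots,z_{d,n})\ge 0$ a.e.; continuity of~$L$ yields strong $L^2$-convergence of $L(u_n,z_{1,n},\dots,z_{d,n})$ to $L(u,z_1,\dots,z_d)$, so that a subsequence converges a.e.\ in $(0,T)$, preserving the inequality in the limit. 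A constraint of the second form $DL(u,z_1,\dots,z_d)\ge 0$ is convex by linearity of~$D$ and affine-linearity of~$L$, and its closedness is an immediate consequence of Lemma~\ref{lem:nndu} applied to $w_n:=L(u_n,z_{1,n},\dots,z_{d,n})$.

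For the projection, convexity is straightforward because projection to a coordinate is a linear map and preserves convex combinations. To prove closedness under the boundedness hypothesis, I would take a strongly convergent sequence $u_n\to u$ in $L^2_{\star}(0,T)$ with each $u_n$ in the projection and choose auxiliary components $z_{i,n}$ such that $(u_n,z_{1,n},\dots,z_{d,n})\in U^\dext$. Since $L^2_{\star}(0,T)^{d+1}$ is a Hilbert space and $U^\dext$ is bounded, a subsequence can be extracted along which each $z_{i,n}$ converges weakly to some $z_i$, and then $(u_n,z_{1,n},\dots,z_{d,n})$ converges weakly to $(u,z_1,\dots,z_d)$ in $L^2_{\star}(0,T)^{d+1}$. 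By the convexity and norm-closedness established in the first part, Mazur's theorem gives weak closedness of $U^\dext$, hence the weak limit belongs to $U^\dext$ and $u$ lies in the projection.

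The main obstacle is this projection step: in infinite dimension, projections of closed convex sets need not be closed, which is precisely why the boundedness assumption enters the statement. The delicate point is to identify a topology in which the auxiliary components $z_{i,n}$ admit a limit, and the combination of reflexivity of $L^2$ with Mazur's theorem is the right tool to bridge strong convergence of the $u_n$ with only weak subsequential convergence of the $z_{i,n}$. The first part of the statement, by contrast, is essentially bookkeeping, with Lemma~\ref{lem:nndu} supplying the one non-trivial ingredient.
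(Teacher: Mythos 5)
Your proposal is correct and follows essentially the same route as the paper: convexity and closedness of each constraint type (with Lemma~\ref{lem:nndu} handling the $DL\ge 0$ constraints), then, for the projection, extraction of a weakly convergent subsequence of the auxiliary components via boundedness and reflexivity, and identification of the weak limit as an element of $U^\dext$. The paper leaves the appeal to Mazur's theorem (weak closedness of strongly closed convex sets) implicit where you state it explicitly, but the argument is the same.
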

\begin{proof}
  Clearly, all constraints described in Condition~(e2) of
  Definition~\ref{def:extform} are convex. To show closedness, let
  $L\colon L^2_{\star}(0,T)^{d+1}\to L^2_{\star}(0,T)$ be any continuous and
  affine-linear operator. Then constraints of
  type~$L(u,z_1,\dots,z_d)\ge 0$ are obviously closed, while
  constraints of type~$DL(u,z_1,\dots,z_d)\ge 0$ are closed by
  Lemma~\ref{lem:nndu}.

  Now assume that~$U^\dext$ is bounded in~$L^2_{\star}(0,T)^{d+1}$ and
  let~$U'$ be the projection of~$U^\dext$ to the first
  coordinate. Then clearly~$U'$ is convex. To show closedness,
  let~$u_n$ be a sequence in~$U'$ that strongly converges to
  some~$u\in L^2_{\star}(0,T)$. For~$n\in\N$,
  choose~$(z_1)_n,\dots,(z_d)_n$ in~$L^2_{\star}(0,T)$
  with~$(u_n,(z_1)_n,\dots,(z_d)_n)\in U^\dext$.  Since~$U^\dext$ is
  bounded in the reflexive Banach space~$L^2_{\star}(0,T)^{d+1}$,
  there exists a subsequence of~$(u_n,(z_1)_n,\dots,(z_d)_n)$
  converging weakly to some~$(u',z_1,\dots,z_d)\in U^\dext$. But~$u_n$
  converges strongly to~$u$ by assumption, hence we must have~$u'=u$,
  so that~$u$ belongs to~$U'$. Thus $U'$ is closed
  in~$L^2_{\star}(0,T)$.
\end{proof}

In finite dimension, the main relevance of small extended
formulations stems from the fact that they provide efficient and elegant
approaches to solving combinatorial optimization problems, by
reformulating them as polynomial-size linear programs. In order to
derive a similar result in function space, it is necessary that the
extended formulations carry over to the discretization of the
problem. This is formalized in the following definition.
\begin{definition}\label{def:compat}
  An extended formulation~$U^\dext$ of~$U$ is \emph{compatible with
  discretization} if for all~$N\in\N$ the following conditions hold:
  \begin{itemize}
  \item[(c1)] the projection of~$(U^\dext)_N$ to the first coordinate
    agrees with~$\conv(U_N)$,
  \item[(c2)] the affine-linear operators $L$ defining~$U^\dext$
    map~$L^2_{\star}(0,T)^{d+1}_N$ to~$L^2_{\star}(0,T)_N$, and
  \item[(c3)] the coefficients of the restrictions
    $L|_{L^2_{\star}(0,T)^{d+1}_N}$ can be computed in polynomial
    time.
  \end{itemize}
\end{definition}
Note that the restrictions mentioned in (c3)
map~$L^2_{\star}(0,T)^{d+1}_N$ to $L^2_{\star}(0,T)_N$ according
to~(c2), so that they can be identified with affine-linear
functions~$\R^{(d+1)N}\to\R^N$, which are defined by polynomially many
coefficients.

Using Definition~\ref{def:compat}, we now have
\begin{theorem}\label{thm:extended}
  Assume that~$U$ admits a small extended formulation~$U^{\dext}$
  that is compatible with discretization. Then, for fixed~$N\in\N$,
  the discretization~$U_N$ of~$U$ is tractable, i.e., for any~$c\in
  L^2_{\star}(0,T)$ given by its piecewise averages~$c_N$, the linear
  objective function
  \begin{equation}\label{eq:obj}
    \int_0^T c(t)u(t)\,\d t
  \end{equation}
  can be minimized over~$u\in U_N$ in polynomial time.
\end{theorem}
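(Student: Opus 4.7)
My strategy is to reduce the problem to a polynomial-size rational linear program over the discretized extended formulation $(U^\dext)_N$, and then invoke polynomial-time LP solvability.

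First, I would rewrite the objective in $\R^N$. Since every $u \in U_N$ has the form $u = \bar v$ for some $v \in \R^N$ and the objective~\eqref{eq:obj} depends on $c$ only through its piecewise averages $c_N$, evaluating the integral on piecewise constant functions gives
\[
\int_0^T c(t)\,\bar v(t)\,\d t \;=\; \tfrac{T}{N}\, c_N^{\top} v\,.
\]
Because $u \in \{0,1\}$ a.e., the averages satisfy $U_N \subseteq [0,1]^N$, so the objective is bounded below. Since it is linear, $\inf_{v \in U_N} c_N^{\top} v = \inf_{v \in \conv(U_N)} c_N^{\top} v$, and by compatibility condition~(c1) the latter equals the infimum of $c_N^{\top} v$ over all $(v, w_1,\dots,w_d) \in (U^\dext)_N$.

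Next, I would verify that $(U^\dext)_N$ is a polyhedron in $\R^{(d+1)N}$ described by polynomially many linear inequalities with polynomial-time computable coefficients. Every continuous affine-linear operator $L$ appearing in the description of $U^\dext$ maps, by~(c2), grid-piecewise constant tuples to grid-piecewise constants, and by~(c3) its restriction has coefficients available in polynomial time. An a.e.\ constraint $L(\cdots) \ge 0$ becomes the $N$ inequalities $r_i \ge 0$ on the output values. A derivative constraint $DL(\cdots) \ge 0$ translates, by Lemma~\ref{lem:mono} applied to the piecewise constant output (for the forward direction) together with the direct observation that the distributional derivative of a piecewise constant element of $L^2_{\star}(0,T)$ is a sum of Dirac masses at the grid points whose weights are the successive jumps (for the converse), to the monotone chain $0 \le r_1 \le r_2 \le \dots \le r_N$ on the output values. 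Hence each original constraint produces at most $N$ finite-dimensional inequalities, and smallness of $U^\dext$ keeps the total within polynomial size.

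Finally, I would solve the resulting LP---which has $(d+1)N$ rational variables and polynomially many inequalities, and is bounded below---using a standard polynomial-time algorithm such as the ellipsoid or an interior-point method. That its optimal value equals $\min_{u \in U_N} \int_0^T c u\,\d t$ follows by a short Carathéodory argument: any optimal projected point $v^{\star} \in \conv(U_N)$ admits a finite representation $v^{\star} = \sum_j \lambda_j u^{j}$ with $u^{j} \in U_N$, so at least one $u^{j}$ must attain the same linear objective value.

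The hard part will be step two, in particular the translation of the distributional-derivative constraint into the correct finite-dimensional chain $0 \le r_1 \le \dots \le r_N$. The inequality $L \ge 0$ a.e.\ is immediate, but the derivative condition requires one to identify $DL(\cdots) \ge 0$ for piecewise constant output with non-negativity of each jump $r_i - r_{i-1}$ (with $r_0 := 0$ encoded by the zero tail on $(-\infty,0)$ built into $L^2_{\star}(0,T)$). Once this is in place, the remaining ingredients---polynomial-size description and polynomial-time LP solvability---are essentially direct consequences of the definitions of smallness and compatibility with discretization.
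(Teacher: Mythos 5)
Your proposal is correct and follows essentially the same route as the paper: use condition (c1) to pass from $U_N$ to the projection of $(U^\dext)_N$, use smallness together with (c2) and (c3) to write $(U^\dext)_N$ as a polynomial-size system of linear inequalities in $\R^{(d+1)N}$, and solve the resulting LP in polynomial time. Your additional detail on translating $DL(\cdots)\ge 0$ into the jump conditions $0\le r_1\le\dots\le r_N$ (and the closing Carath\'eodory step) is a correct filling-in of what the paper leaves implicit.
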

\begin{proof}
  Given~$(u,z_1,\dots,z_d)\in L^2_{\star}(0,T)_N$, we
  have~$(u,z_1,\dots,z_d)\in (U^\dext)_N$ if and only if the
  constraints in Condition~(e2) of Definition~\ref{def:extform} are
  satisfied by~$(u,z_1,\dots,z_n)$. By Condition~(c1),
  minimizing~\eqref{eq:obj} over~$u\in U_N$ or, equivalently,
  over~$u\in\conv(U_N)$, thus reduces to minimizing the same linear
  function over~$(u,z_1,\dots,z_d)\in (U^\dext)_N$.  Identifying
  elements of~$L^2_{\star}(0,T)_N$ with vectors in~$\R^N$, the latter in
  turn reduces to the minimization of~$c_N^\top u$ subject to
  constraints of type~$L(\overline
  u,\overline{z_1},\dots,\overline{z_d})\ge 0$ and~$DL(\overline
  u,\overline{z_1},\dots,\overline{z_d})\ge 0$, where the variables
  are~$u,z_1,\dots,z_d\in\R^N$. By the smallness of the extended
  formulation and by~(c2), the latter constraints translate to
  polynomially many affine-linear constraints in~$(u,z_1,\dots,z_d)$,
  the coefficients of which can be computed efficiently by
  Condition~(c3). In summary, the minimization problem reduces to the
  solution of a polynomial-size linear program in dimension~$(d+1)N$.
\end{proof}
Note that the running time for minimization over~$U_N$ obtained in the
above proof is only pseudopolynomial in~$N$, as~$N$ enters the
dimension of the resulting linear program.

Formally, the set~$(U^\dext)_N$ is \emph{not} an extended formulation
of~$U_N$ when interpreting elements of~$U_N$ and~$(U^\dext)_N$ as
piecewise constant functions rather than finite-dimensional vectors,
as in Definition~\ref{def:disc}. Indeed, the linear description
derived from~$U^\dext$ does not guarantee that the functions are
piecewise constant, i.e., it does not completely
describe~$(U^\dext)_N$.

Theorem~\ref{thm:extended} states that small and compatible extended
formulations immediately lead to tractable discretizations,
independent of the chosen number~$N$ of grid cells. However, depending
on the context, it may be enough from a practical point of view to
obtain a tractable discretization after a suitable refinement of the
grid. As we will see in the following, it may well happen that a
discretization turns tractable only after increasing the number of
grid cells. For this reason, we also consider the following weaker
condition:
\begin{definition}\label{def:weakextform}
  An extended formulation~$U^{\dext}$ of~$U$ is \emph{weakly
  compatible with discretization} if for all~$M\in\N$ one can
  efficiently compute some~$\ell\in\N$ which is polynomial in~$M$ such
  that Conditions~(c1) to~(c3) of Definition~\ref{def:compat} hold
  for~$N=\ell M$.
\end{definition}
To clarify the polynomiality condition in this definition, it is
required that~$\ell$ can be computed in polynomial time from the input
of the problem, i.e., the input defining the instance~$U$ within the
given class of feasible sets, and from~$M$. Moreover, the value
of~$\ell$ must be polynomial in the value of~$M$. This is crucial
since the dimension of the linear programs resulting from
discretization is linear in the number of grid cells. For a fixed
problem input, the size of the linear program thus increases only
polynomially by the refinement.

If~$U$ admits a small and weakly compatible extended formulation, it
follows directly from Theorem~\ref{thm:extended} that for
any~$M\in\N$ one can efficiently compute some~$\ell\in\N$ polynomial
in~$M$ such that~$U_{\ell M}$ is tractable. It depends on the given
setting which of the two concepts of compatibility is appropriate: if
the discretization can be arbitrarily refined by the user, a weakly
compatible extended formulation is enough to obtain tractable
discretizations.

\section{Fixed Bound on the Variation}\label{sec:bounded}

Our first goal is to devise an extended formulation for the case where
a fixed bound on the variation is the only restriction on the set of
feasible controls; this case has been investigated
in~\cite{KMS11,ZS20,partI}. We thus consider the set
\[
\DD:=\{u\in BV_{\star}(0,T;\{0,1\})\colon |u|_{BV(-\infty,T)}\le \sigma\}
\]
for given~$\sigma\in\N$. A natural convex relaxation of $\DD$ arises
from omitting the binarity constraint on the control, we then obtain
\[
\DD^{\textnormal{rel}} := \big\{ u \in BV_{\star}(0,T;[0,1])\colon \;
|u|_{BV(-\infty,T)} \leq \sigma \big\}\;.
\]
Note that the set~$\DD^{\textnormal{rel}}$ is convex and, by the lower
semicontinuity of~$|\cdot|_{BV(0,T)}$ and the boundedness
of~$\DD^{\textnormal{rel}}$ in~$BV(0,T)$, also closed
in~$L^2_{\star}(0,T)$. We now show that~$\DD^{\textnormal{rel}}$ can be
obtained as the projection of a very small extended formulation,
where a single additional function~$z$ keeps track of the accumulated
variation of~$u$.
\begin{theorem}\label{thm:nat}
  The set
  \[
  \DD^{\textnormal{rel,ext}}:=\big\{ u\in L^2_{\star}(0,T;[0,1]),~z\in
  L^2_{\star}(0,T;[0,\sigma])\colon Dz \ge Du,~Dz \ge -Du\big\}
  \]
  is a small extended formulation of~$\DD^{\textnormal{rel}}$
  which is compatible with discretization.
\end{theorem}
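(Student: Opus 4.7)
The plan is to verify the four requirements in turn: linearity (Definition~\ref{def:extform}(e2)), smallness, the projection identity (e1), and the three discretization conditions (c1)--(c3). Linearity and smallness are immediate: the formulation consists of the six continuous affine-linear constraints $u \ge 0$, $1-u \ge 0$, $z \ge 0$, $\sigma - z \ge 0$ (all a.e.\ in $(0,T)$) and $D(z-u) \ge 0$, $D(z+u) \ge 0$, together with a single auxiliary control~$z$, and the number of controls and constraints is independent of the input size.

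For the inclusion of the projection into $\DD^{\textnormal{rel}}$, let $(u,z) \in \DD^{\textnormal{rel,ext}}$. Adding the two derivative constraints yields $Dz \ge 0$, so $z$ is distributionally monotone; combined with $0 \le z \le \sigma$ this places $z$ in $BV_{\star}(0,T)$ with total variation at most $\sigma$. The bounded functions $z-u \in [-1,\sigma]$ and $z+u \in [0,\sigma+1]$ are likewise distributionally monotone, hence also in $BV$, and so is their difference $u = \frac{1}{2}((z+u)-(z-u))$. For the variation bound on~$u$, note that $\partial u = \frac{1}{2}(\partial(z+u) - \partial(z-u))$ writes $\partial u$ as a difference of two non-negative measures whose sum is $\partial z$; by the minimality property of the Jordan decomposition, $|\partial u|(\Omega) \le \partial z(\Omega) \le \sigma$. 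For the reverse inclusion, given $u \in \DD^{\textnormal{rel}}$, set $z := u^+ + u^-$ as defined in Section~\ref{sec:defs}. Then $z$ is non-decreasing, vanishes on $(-\infty,0)$, and is bounded by $|\partial u|(\Omega) \le \sigma$, so $z \in L^2_{\star}(0,T;[0,\sigma])$; the identities $Du^\pm = (\partial u)^\pm$ from Section~\ref{sec:defs} give $Dz = |\partial u| \ge \pm\, \partial u$, so $(u,z) \in \DD^{\textnormal{rel,ext}}$.

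For compatibility, the six operators trivially preserve piecewise-constancy, yielding (c2), and their restrictions to $L^2_{\star}(0,T)^{2}_N$ have coefficients that are $\pm 1$ or the constants $0$, $1$, $\sigma$, giving (c3). For (c1), writing $u_i, z_i$ for the cell values and using $u_0 = z_0 = 0$, the distributional derivative inequalities reduce at each cell boundary to $z_i - z_{i-1} \ge \pm(u_i - u_{i-1})$, equivalently $z_i - z_{i-1} \ge |u_i - u_{i-1}|$. Projecting out $z$ via the minimal choice $z_i := \sum_{j=1}^i |u_j - u_{j-1}|$ shows that the projection of $(\DD^{\textnormal{rel,ext}})_N$ equals $\{u \in [0,1]^N \mid \sum_{i=1}^N |u_i - u_{i-1}| \le \sigma\}$. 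Since a piecewise-constant function $\bar v$ lies in $\DD^{\textnormal{rel}}$ exactly when its values $v_i$ satisfy $v_i \in [0,1]$ and $|\bar v|_{BV(-\infty,T)} = \sum_i |v_i - v_{i-1}| \le \sigma$, the discretization $(\DD^{\textnormal{rel}})_N$ coincides with this polytope; it is already convex, so its convex hull matches the projection. The main technical obstacle is the variation estimate in the first inclusion, where one must recover both the bounded variation of $u$ and the tight bound $|u|_{BV(-\infty,T)} \le \sigma$ from the purely distributional conditions involving the auxiliary control~$z$.
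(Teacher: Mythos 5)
Your proposal is correct and follows the same overall skeleton as the paper's proof (explicit listing of the six constraints, forward and reverse inclusion for (e1), verification of (c1)--(c3)), but the key step --- recovering $|u|_{BV(-\infty,T)}\le\sigma$ from the purely distributional constraints --- is argued by a genuinely different route. The paper splits an arbitrary test function $\varphi$ as $\varphi_{\max}-\varphi_{\min}$ with $\varphi_{\max},\varphi_{\min}\in H^1_0(-\infty,T)$ non-negative, applies Lemma~\ref{lem:approx_cont} to $Dz\ge Du$ and $Dz\ge-Du$ separately, and obtains $\int\varphi'(t)u(t)\,\d t\le\int|\varphi|\,\d\partial z\le\sigma\,||\varphi||_\infty$, concluding directly from the definition of the $BV$ seminorm. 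You instead first upgrade $z+u$ and $z-u$ to $BV$ functions (bounded plus distributionally monotone), represent their derivatives by non-negative measures, and invoke the minimality of the Jordan decomposition on $\partial u=\tfrac 12\big(\partial(z+u)-\partial(z-u)\big)$ to get $|\partial u|\le\partial z$. Both routes are sound; yours trades the test-function estimate for the standard fact that a bounded function with non-negative distributional derivative has bounded variation --- a fact the paper itself invokes for $z$, so nothing extra is assumed --- and it delivers the slightly stronger measure inequality $|\partial u|\le\partial z$ rather than only a bound on the total mass. Your treatment of (c1) is also more explicit: you project out $z$ in finite dimensions and identify the projection of $(\DD^{\textnormal{rel,ext}})_N$ with the polytope $\{u\in[0,1]^N\colon\sum_i|u_i-u_{i-1}|\le\sigma\}$, whereas the paper reuses the function-space witness and observes it is piecewise constant on the same grid. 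The reverse inclusion is identical in substance, since your choice $z=u^++u^-$ coincides with the paper's $t\mapsto|\partial u|\big((-\infty,t)\big)$ up to a null set.
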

\begin{proof}
  In the format of Definition~\ref{def:extform}, the constraints in
  the above formulation explicitly read $u\ge 0$, $-u+\boldsymbol 1\ge
  0$, $z\ge 0$, $-z+\boldsymbol\sigma\ge 0$, $D(z-u)\ge 0$, and
  $D(z+u)\ge0 $, where~$\boldsymbol 1$ and~$\boldsymbol\sigma$ denote
  the functions being constantly~$1$ and~$\sigma$,
  respectively. Condition~(e2) of Definition~\ref{def:extform} as well
  as smallness are thus obvious, so it suffices to show
  Condition~(e1) and compatibility with discretization.

  We first show
  Condition~(e1). Given~$(u,z)\in\DD^{\textnormal{rel,ext}}$, we
  observe that~$z$ is essentially bounded by~$\sigma$ and
  monotonically increasing by~$Dz\ge 0$. Thus~$z\in BV_{\star}(0,T)$,
  such that~$Dz$ can be represented by a regular Borel
  measure~$\partial z$ satisfying~$\partial
  z\big((-\infty,T)\big)\le\sigma$. Now for
  every~$\varphi\in\testfunc{-\infty,T}$, we have
  $\varphi_{\max},\varphi_{\min}\in H^1_{0}(-\infty,T)$
  for~$\varphi_{\max}:=\max\{\varphi,0\}$
  and~$\varphi_{\min}:=-\min\{\varphi,0\}$ and the weak derivatives
  satisfy~$\varphi'=\varphi_{\max}'-\varphi_{\min}'$. Using~$Dz\ge Du$
  and~$Dz\ge -Du$ together with Lemma~\ref{lem:approx_cont}, we derive
  \begin{eqnarray*}
    \int_{-\infty}^T\varphi'(t)u(t)\,\d t
    & = & -\int_{-\infty}^T\varphi_{\max}'(t)(-u(t))\,\d t-\int_{-\infty}^T\varphi_{\min}'(t)u(t)\,\d t\\
    & \le & -\int_{-\infty}^T\varphi_{\max}'(t)z(t)\,\d t-\int_{-\infty}^T\varphi_{\min}'(t)z(t))\,\d t\\ 
    & = & \int_{-\infty}^T\varphi_{\max}\,\d \partial z+\int_{-\infty}^T\varphi_{\min}\,\d \partial z\\
    & = & \int_{-\infty}^T|\varphi|\,\d \partial z ~\le~ \sigma\, ||\varphi||_\infty\;.
  \end{eqnarray*}
  Since~$\varphi$ was arbitrary, this implies~$u\in BV_{\star}(0,T)$
  with~$|u|_{BV(-\infty,T)}\le \sigma$,
  thus~$u\in\DD^{\textnormal{rel}}$.  For the other direction,
  let~$u\in\DD^{\textnormal{rel}}$ and define~$z\in L^2_{\star}(0,T)$
  by $z(t):=|\partial u|\big((-\infty,t)\big)$ for~$t\in[0,T]$.  Then
  clearly~$(u,z)\in\DD^{\textnormal{rel,ext}}$ and hence~$u$ belongs
  to the projection of~$\DD^{\textnormal{rel,ext}}$ to the first
  coordinate.

  To show compatibility with discretization, first consider
  Condition~(c1) of Definition~\ref{def:compat} and let~$u\in
  (\DD^{\textnormal{rel}})_N\subseteq\DD^{\textnormal{rel}}$. Defining
  $z(t)=|\partial u|\big((-\infty,t)\big)$ again, we obtain that~$z$
  is piecewise constant on the same grid as~$u$. Thus~$z\in
  L^2_{\star}(0,T)_N$ and~$(u,z)\in (\DD^{\textnormal{rel,ext}})_N$,
  hence~$u$ belongs to the projection
  of~$(\DD^{\textnormal{rel,ext}})_N$ to the first component. For the
  reverse inclusion, let~$u$ belong to the latter
  projection. Choose~$z\in L^2_{\star}(0,T)_N$ with~$(u,z)\in
  (\DD^{\textnormal{rel,ext}})_N$. Then~$u\in \DD^{\textnormal{rel}}$
  follows from Condition~(e1) already shown above and~$u\in
  L^2_{\star}(0,T)_N$ follows from the choice of~$u$, thus~$u\in
  \DD^{\textnormal{rel}}_N$. This shows~(c1).

  Finally, the remaining conditions~(c2) and~(c3) of
  Definition~\ref{def:compat} are obviously satisfied: given~$N\in\N$,
  the discretized constraints read $u_i\ge 0$, $-u_i+1\ge 0$, $z_i\ge
  0$, $-z_i+\sigma\ge 0$, $(z_i-u_i)-(z_{i-1}-u_{i-1})\ge 0$,
  and~$(z_i+u_i)-(z_{i-1}+u_{i-1})\ge 0$ for~$i=1,\dots,N$.
\end{proof}
By Theorem~\ref{thm:nat}, the projection
of~$\DD^{\textnormal{rel,ext}}$ to the first coordinate yields a
relaxation for the set~$\DD$. However, it can be shown that
$\cconv(\DD)$ is \emph{strictly} contained in~$\DD^{\textnormal{rel}}$
in general; see~\cite[Counterexample~3.1]{partI}. In other words, the
formulation in Theorem~\ref{thm:nat} is \emph{not} an extended
formulation for~$\DD$ in general. Our next goal is thus to devise an
extended formulation that instead projects to~$\overline{\conv}(\DD)$.

We construct such a formulation by first considering the discretized
problem, for which an extended formulation has been presented
in~\cite[Section~4.5]{maja}: assuming that~$\sigma$ is even, the
projection of
\[
\begin{aligned}
  (\DD_N)^{\dext} := \big\{ & u \in [0,1]^N,~z\in\R^N \colon\\
  & u_i-u_{i-1}\le z_i-z_{i-1} \quad\text{for }i=1,\dots,N,\\
  & 0 \le z_1 \le z_2 \le \dots \le  z_N \le \tfrac{\sigma}2\big\}
\end{aligned}
\]
to the $u$-space coincides with~$\conv(\DD_N)$, where again we
identify piecewise constant functions with finite-dimensional
vectors. This follows from~\cite[Lemma~4.29]{maja} using the
substitution~$z_i\leftarrow\sum_{j=1}^iz_j$. In this model, we
define~$z_0=u_0=0$, but do not eliminate these variables
for sake of a simpler formulation. The first constraint thus reduces
to~$u_1\le z_1$.

We now turn our attention back to the infinite-dimensional setting and
show that~$\cconv(\DD)$ can be described by an extended formulation
that is inspired by the formulation~$(\DD_{N})^{\dext}$. For this,
define
\[
\DD^{\dext}:=\big\{ u\in L^2_{\star}(0,T;[0,1]),~z\in
L^2_{\star}(0,T;[0,\tfrac{\sigma}2])\colon Dz\ge Du,~Dz\ge 0\big\}\;.
\]
For an illustration of this construction, see
Figure~\ref{fig:ext}. Using this, we now obtain an extended
formulation of~$\DD$.

\begin{figure}
  \begin{center}
    \begin{tikzpicture}[scale=1.3]
      \draw[->,color=gray!75] (0,0) -- (0,2);
      \draw[->,color=gray!75] (0,0) -- (4,0);
      \draw[color=gray!75,dashed] (0,0.5) -- (4,0.5);
      \draw[color=gray!75] (3.5,0.1) -- (3.5,-0.1);
      \draw (-0.2,0) node {\small $0$};
      \draw (-0.2,0.5) node {\small $1$};
      \draw (3.7,0.2) node {\small $T$};
      \draw[color=blue] (0.3,0.7) node {\small $u$};
      \draw[color=blue,thick] (0,0) -- (0.3,0);
      \draw[color=blue,dotted,thick] (0.3,0) -- (0.3,0.5);
      \draw[color=blue,thick] (0.3,0.5) -- (0.9,0.5);
      \draw[color=blue,dotted,thick] (0.9,0.5) -- (0.9,0);
      \draw[color=blue,thick] (0.9,0) -- (1.5,0);
      \draw[color=blue,dotted,thick] (1.5,0) -- (1.5,0.5);
      \draw[color=blue,thick] (1.5,0.5) -- (1.8,0.5);
      \draw[color=blue,dotted,thick] (1.8,0.5) -- (1.8,0);
      \draw[color=blue,thick] (1.8,0) -- (2.1,0);
      \draw[color=blue,dotted,thick] (2.1,0) -- (2.1,0.5);
      \draw[color=blue,thick] (2.1,0.5) -- (3.0,0.5);
      \draw[color=blue,dotted,thick] (3.0,0.5) -- (3.0,0);
      \draw[color=blue,thick] (3.0,0) -- (3.5,0);
    \end{tikzpicture}
    \qquad
    \begin{tikzpicture}[scale=1.3]
      \draw[->,color=gray!75] (0,0) -- (0,2);
      \draw[->,color=gray!75] (0,0) -- (4,0);
      \draw[color=gray!75,dashed] (0,0.5) -- (4,0.5);
      \draw[color=gray!75,dashed] (0,1.5) -- (4,1.5);
      \draw[color=gray!75] (3.5,0.1) -- (3.5,-0.1);
      \draw (-0.2,0) node {\small $0$};
      \draw (-0.2,1.5) node {\small $3$};
      \draw (3.7,0.2) node {\small $T$};
      \draw[color=red] (0.375,0.75) node {\small $u^+$};
      \draw[color=blue] (0,0) -- (0.3,0);
      \draw[color=blue,dotted] (0.3,0) -- (0.3,0.5);
      \draw[color=blue] (0.3,0.5) -- (0.9,0.5);
      \draw[color=blue,dotted] (0.9,0.5) -- (0.9,0);
      \draw[color=blue] (0.9,0) -- (1.5,0);
      \draw[color=blue,dotted] (1.5,0) -- (1.5,0.5);
      \draw[color=blue] (1.5,0.5) -- (1.8,0.5);
      \draw[color=blue,dotted] (1.8,0.5) -- (1.8,0);
      \draw[color=blue] (1.8,0) -- (2.1,0);
      \draw[color=blue,dotted] (2.1,0) -- (2.1,0.5);
      \draw[color=blue] (2.1,0.5) -- (3.0,0.5);
      \draw[color=blue,dotted] (3.0,0.5) -- (3.0,0);
      \draw[color=blue] (3.0,0) -- (3.5,0);
      \draw[color=red,thick] (0,0) -- (0.3,0);
      \draw[color=red,dotted,thick] (0.3,0) -- (0.3,0.5);
      \draw[color=red,thick] (0.3,0.5) -- (1.5,0.5);
      \draw[color=red,dotted,thick] (1.5,0.5) -- (1.5,1);
      \draw[color=red,thick] (1.5,1.0) -- (2.1,1);
      \draw[color=red,dotted,thick] (2.1,1) -- (2.1,1.5);
      \draw[color=red,thick] (2.1,1.5) -- (3.5,1.5);
    \end{tikzpicture}
  \end{center}
  \caption{Illustration of the extended formulation~$\DD^\dext$
    for~$\sigma=6$. A feasible control~$u$ is shown on the left hand
    side, the control~$z:=u^+$ is added on the right hand
    side.}
  \label{fig:ext}
\end{figure}
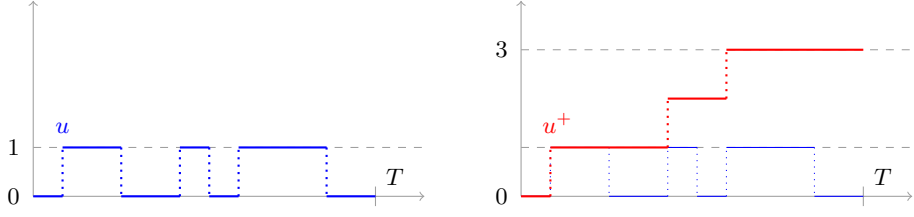

\begin{theorem}\label{thm:ext2}
  Let~$\sigma$ be even. Then~$\DD^{\dext}$ is a small
  extended formulation of~$\DD$ which is compatible with discretization.
\end{theorem}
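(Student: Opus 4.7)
The plan is to verify linearity~(e2), smallness, the projection condition~(e1), and compatibility with discretization. Linearity and smallness are immediate by inspection of the defining constraints $u\ge 0$, $\boldsymbol 1-u\ge 0$, $z\ge 0$, $\tfrac{\sigma}{2}\boldsymbol 1-z\ge 0$, $D(z-u)\ge 0$, and $Dz\ge 0$, all of which fit the format required in Definition~\ref{def:extform}(e2), and which involve only two controls and a constant number of inequalities.

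To see that the projection of~$\DD^\dext$ contains $\cconv(\DD)$, I would first show that $\DD$ itself lies in the projection by associating to each $u\in\DD$ the control $z:=u^+$. The Jordan decomposition gives $Dz=(\partial u)^+\ge 0$ and $Dz-Du=(\partial u)^-\ge 0$, so both derivative inequalities are satisfied. The bound $z(T)\le\sigma/2$ uses that $\sigma$ is even: since $u$ starts at~$0$ and is binary, its switches alternate between up and down, so writing $m=|u|_{BV(-\infty,T)}\le\sigma$ for the integer number of switches, the number of up-jumps is $m/2$ if $m$ is even and $(m+1)/2$ if $m$ is odd. In the odd case, evenness of $\sigma$ forces $m\le\sigma-1$, so in both cases $u^+(T)\le\sigma/2$. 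Since $\DD^\dext$ is bounded in~$L^2_\star(0,T)^2$, Lemma~\ref{lem:convclosed} renders the projection closed and convex, so it contains~$\cconv(\DD)$.

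For the reverse inclusion, as well as condition~(c1), let $(u,z)\in\DD^\dext$. Since $z$ is bounded in $[0,\sigma/2]$ and monotonically increasing by $Dz\ge 0$, one has $z\in BV_\star(0,T)$; similarly $z-u$ is bounded and monotone, so $z-u\in BV_\star(0,T)$. Hence $u=z-(z-u)\in BV_\star(0,T)$, and Lemma~\ref{lem:approx} yields $u_{[N]}\to u$ and $z_{[N]}\to z$ strongly in~$L^2_\star(0,T)$. Lemma~\ref{lem:mono} applied to $z$ and to $z-u$ shows that the discrete monotonicity constraints survive averaging, while pointwise bounds transfer trivially, hence $(u_{[N]},z_{[N]})\in(\DD^\dext)_N$. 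Identifying piecewise constant functions with vectors in~$\R^N$, the set $(\DD^\dext)_N$ coincides with the finite-dimensional formulation $(\DD_N)^\dext$ recalled from~\cite[Section~4.5]{maja}, whose projection to the $u$-coordinate equals $\conv(\DD_N)$ by~\cite[Lemma~4.29]{maja}. Therefore $u_{[N]}\in\conv(\DD_N)\subseteq\cconv(\DD)$, and closedness of~$\cconv(\DD)$ in the limit $N\to\infty$ gives $u\in\cconv(\DD)$. The same identification establishes~(c1), while (c2) and~(c3) are immediate since all operators involved are fixed linear combinations mapping piecewise constant functions to piecewise constant functions.

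The main obstacle I expect is the reverse inclusion: the defining constraints give no direct bound on~$-Du$, so establishing $u\in BV_\star(0,T)$ via the decomposition $u=z-(z-u)$ is the crucial step that unlocks Lemma~\ref{lem:approx} and reduces matters to the known finite-dimensional result. The parity argument needed for $z(T)\le\sigma/2$ is a secondary subtlety, explaining why $\sigma$ must be assumed even.
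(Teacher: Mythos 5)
Your proof is correct and follows essentially the same route as the paper: the witness $z:=u^+$ with the parity argument for $z\le\tfrac{\sigma}{2}$, approximation via Lemma~\ref{lem:approx} and Lemma~\ref{lem:mono} to land in $(\DD_N)^{\dext}$, the finite-dimensional result of~\cite{maja}, and Lemma~\ref{lem:convclosed} to pass to the closed convex hull. Your explicit verification that $u=z-(z-u)\in BV_{\star}(0,T)$ before invoking Lemma~\ref{lem:approx} is a step the paper leaves implicit, but otherwise the arguments coincide.
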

\begin{proof}
  Again, Condition~(e2) of Definition~\ref{def:extform} and
  smallness are easily verified, so it suffices to show
  Condition~(e1) as well as compatibility. First assume that~$(u,z)\in
  \DD^{\dext}$. By Lemma~\ref{lem:approx}, we then have $(\avn uN,\avn
  zN)\to (u,z)$ for~$N\to\infty$. By construction, it follows
  that~$u_{N}\in[0,1]^{N}$ and~$z_{N}\in [0,\tfrac\sigma 2]^{N}$ for
  all~$N\in\N$. Moreover, applying Lemma~\ref{lem:mono} to both~$z$
  and~$z-u$ yields~$(z_{N})_i-(z_{N})_{i-1}\ge 0$
  and~$(z_{N})_i-(z_{N})_{i-1}\ge (u_{N})_{i}-(u_{N})_{i-1}$. In
  summary, we thus have $(u_N,z_N)\in (\DD_{N})^{\dext}$.
  Hence~$u_N\in\conv(\DD_N)$ and thus~$\avn uN\in\conv(\DD)$, using
  the finite-dimensional extended formulation. This shows~$u\in
  \overline\conv(\DD)$.

  Now let~$u\in\cconv(\DD)$. For showing that~$u$ belongs to the
  projection of~$\DD^{\dext}$ to the first coordinate, we
  may assume~$u\in\DD$ by Lemma~\ref{lem:convclosed}.
  Define~$z:=u^+$. Then~$z\in L^2_{\star}(0,T)$ with~$Dz\ge Du$ and~$Dz\ge
  0$, thus also~$z\ge 0$ almost
  everywhere. Moreover,
  \[
  \partial u^+\big([0,T)\big)+\partial u^-\big([0,T)\big)\le|u|_{BV(-\infty,T)}\le \sigma
  \]
  and, since~$u\le 1$ almost everywhere, we have
  \[
  \partial u^+\big([0,T)\big)-\partial u^-\big([0,T)\big)=\partial u\big([0,T)\big)\le 1\;.
  \]
  Summing up, we derive~$\partial z\big([0,T)\big)=\partial u^+\big([0,T)\big)\le\tfrac{\sigma+1}
  2$. Since~$\partial z\big([0,T)\big)$ is integer and~$\sigma$ is even, this
  implies~$\partial z\big([0,T)\big)\le\tfrac\sigma 2$ and hence~$z\le\tfrac\sigma 2$
  almost everywhere.

  Finally, Condition~(c1) of Definition~\ref{def:compat} follows
  directly from the fact that~$(\DD^\dext)_N$ corresponds to the
  finite-dimensional extended formulation~$(\DD_N)^\dext$ of~$\DD_N$,
  while~(c2) and~(c3) are again obvious.
\end{proof}
The preceding proof is a blueprint for similar results on extended
formulations in function space, provided that an extended formulation
for the discretized problem is known. The compatibility with
discretization follows easily in this situation. The first
inclusion of~(e1) is shown by approximating an element of the
supposed extended formulation by piecewise constant functions according
to Lemma~\ref{lem:approx} and then using the finite-dimensional
result, while for the other inclusion an explicit construction
of~$z_1,\dots,z_d$ from~$u$ is proposed.

Note that the exact extended formulation~$\DD^{\dext}$
of~$\DD$ is defined by the same number of constraints as the extended
formulation~$\DD^{\textnormal{ext,rel}}$ of the
relaxation~$\DD^{\textnormal{rel}}$. The difference between the
two formulations is that the exact formulation only counts the
positive variation and bounds it by~$\tfrac\sigma 2$, while the
relaxed formulation bounds the total variation by~$\sigma$.

In the case of odd~$\sigma$, one can use~\cite[Lemma~4.30]{maja} to
show that an extended formulation of~$\DD$ is given by
\[
\DD^{\dext}:=\big\{ u\in L^2_{\star}(0,T;[0,1]),~z\in
L^2_{\star}(0,T;[0,\tfrac{\sigma-1}2])\colon Dz\ge -Du,~Dz\ge 0\big\}\;.
\]
The proof is very similar to the proof of
Theorem~\ref{thm:ext2}. Instead of bounding the positive variation
by~$\tfrac\sigma 2$, as in the case of even~$\sigma$, we now bound the
negative variation by~$\tfrac{\sigma-1}2$.

\section{Minimum Dwell-time Constraints}\label{sec:dwell}

We next consider the important class of minimum dwell-time
constraints, which are also called min-up/min-down constraints in the
literature; see~\cite{lee04,RajanTakriti,BFR18} for the
finite-dimensional and~\cite{ZRS20} for the infinite-dimensional
case. Given~$L,l\in\Q_+$ with~$L+l>0$ as input, the feasible set~$\DM$
now consists of all~$u \in BV_{\star}(0,T;\{0,1\})$ such that, when
switching up~$u$ at time~$\tau$, no switching down occurs
in~$[\tau,\tau+L)$, and analogously, when switching down~$u$
  at~$\tau$, no switching up occurs in~$[\tau,\tau+l)$. Rajan and
    Takriti~\cite{RajanTakriti} have devised the following extended
    formulation for~$\DM_N$ for the case~$L,l\in\N$, where we have
    applied the same substitution of $z$-variables as in the previous
    section:
\[
\begin{aligned}
  (\DM_{N})^{\dext}:=\big\{ & u \in [0,1]^N,~z\in\R_+^N \colon\\
  & z_i-z_{i-L}\le u_i && \text{for }i=L+1,\dots,N,\\
  & z_i-z_{i-l}\le 1-u_{i-l} && \text{for }i=l+1,\dots,N,\\
  & z_i-z_{i-1} \ge u_i-u_{i-1} && \text{for }i=1,\dots,N,\\
  & z_i-z_{i-1} \ge 0 && \text{for }i=1,\dots,N\big\}\;.
\end{aligned}
\]
Here~$z_0$ and~$u_0$ are again defined as zero. In order to exploit
this model for general~$L,l\in\Q$, the number of grid cells~$N$ must
be chosen such that $\nicefrac{NL}T,\nicefrac{Nl}T\in\N$, as we will
see in the proof of Theorem~\ref{thm:eextended} below (which in
practice represents a fairly severe restriction).

To obtain an extended formulation for the infinite-dimensional
set~$\overline\conv(\DM)$, we first define a continuous linear map
\[
V_r\colon L^2_{\star}(0,T)\to L^2_{\star}(0,T)
\]
for given~$r\in\R_+$ by $V_r(z)(t)=z(t-r)$, i.e., $V_r$ shifts~$z$ to
the right by~$r$. Note that, by the definition of~$L^2_{\star}(0,T)$, this
implies~$V_r(z)=0$ \aei~$(-\infty,r)$. Now consider the following
formulation, inspired by the extended formulation of Rajan and Takriti:
\begin{subequations}
\begin{align}
  \DM^{\dext}:=\big\{ & u\in L^2_{\star}(0,T;[0,1]),~z\in L^2_{\star}(0,T;[0,\sigma])\colon\notag\\
  & z-V_{L}(z)\le u,\label{eq:v1}\\
  & z-V_{l}(z)\le {\bf 1}-V_{l}(u),\label{eq:v2}\\
  & Dz\ge Du,~Dz\ge 0\big\}\;.\label{eq:v3}
\end{align}
\end{subequations}
Here we define~$\sigma:=\lceil\nicefrac {2T}{L+l}\rceil$.  This
value of~$\sigma$ is large enough to make the upper bound on~$z$
redundant, it is only used to ensure that~$\DM^{\dext}$ is
bounded in~$L^2_{\star}(0,T)^2$.

\pagebreak[3]

\begin{theorem}\label{thm:eextended}
  The set~$\DM^{\dext}$ is a small extended formulation
  of~$\DM$ which is weakly compatible with discretization.
\end{theorem}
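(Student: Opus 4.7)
The plan is to follow the blueprint of the proof of Theorem~\ref{thm:ext2}, with the finite-dimensional Rajan--Takriti formulation $(\DM_N)^{\dext}$ playing the role of the formulation from~\cite{maja}. The new ingredient is the presence of the shift operators $V_L$ and $V_l$, which I must handle by restricting discretization to grids on which they act by shifts of an integer number of cells. Writing $L/T$ and $l/T$ as fractions with denominators $q$ and $q'$, I set $\ell := \operatorname{lcm}(q,q')$. For any $M \in \N$ and $N := \ell M$ one then has $NL/T, Nl/T \in \N$, so that $V_L$ and $V_l$ map $L^2_{\star}(0,T)_N$ into itself by integer-cell shifts. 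Since $\ell$ depends only on the input and not on $M$, this choice will deliver weak compatibility.

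Condition~(e2) and smallness are immediate since $V_L, V_l$ are continuous and affine-linear. For the inclusion of the projection of $\DM^{\dext}$ into $\cconv(\DM)$, I take $(u,z) \in \DM^{\dext}$ and, along $N = \ell M \to \infty$, consider the piecewise averages $(\avn uN, \avn zN) \to (u,z)$ from Lemma~\ref{lem:approx}. Averaging~\eqref{eq:v1} and~\eqref{eq:v2} over each grid cell yields the two shift inequalities of the Rajan--Takriti system (this is where $NL/T, Nl/T \in \N$ is essential), while the distributional inequalities in~\eqref{eq:v3} pass to the discrete monotonicity constraints by Lemma~\ref{lem:mono}. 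Thus $(u_N,z_N) \in (\DM_N)^{\dext}$, and the Rajan--Takriti result together with Lemma~\ref{lem:convdisc} gives $\avn uN \in \conv(\DM)$, so $u \in \cconv(\DM)$ in the limit.

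For the reverse inclusion, note that $\DM^{\dext}$ is bounded in $L^2_{\star}(0,T)^2$ because of the explicit bound $z \le \sigma = \lceil 2T/(L+l)\rceil$, so by Lemma~\ref{lem:convclosed} it suffices to exhibit, for every $u \in \DM$, an admissible $z$. I take $z := u^+$. Then~\eqref{eq:v3} is immediate from the Jordan decomposition, and $z \le \sigma$ because consecutive up-switches are separated by at least $L+l$, bounding their number in $[0,T]$ by $1 + T/(L+l) \le \sigma$. The main obstacle, and the real content of this direction, is verifying~\eqref{eq:v1} and~\eqref{eq:v2}. For~\eqref{eq:v1}, $z(t) - V_L(z)(t)$ counts up-switches of $u$ in a window of length $L$ ending at $t$; dwell-time feasibility forces at most one such switch, and its occurrence at some $\tau$ triggers the min-up condition $u \equiv 1$ on $[\tau,\tau+L) \ni t$, so $u(t)=1$ dominates. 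For~\eqref{eq:v2}, an up-switch at $\tau \in (t-l,t]$ is preceded by a down-switch (or by the initial zero segment) at some $\tau' \le \tau - l \le t-l$, which by the min-down condition forces $u(t-l) = 0$ and hence the right-hand side to be $1$; the boundary cases $t<L$ and $t<l$ are handled by the same counting argument.

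Finally, weak compatibility at $N = \ell M$ is routine: conditions~(c2) and~(c3) hold because $V_L, V_l$ act on $L^2_{\star}(0,T)_N$ by shifts of $NL/T$ and $Nl/T$ cells respectively, which are integers computable in polynomial time, and condition~(c1) holds because $(\DM^{\dext})_N$ literally coincides with the Rajan--Takriti system $(\DM_N)^{\dext}$ for dwell lengths $NL/T$ and $Nl/T$, which describes $\conv(\DM_N)$.
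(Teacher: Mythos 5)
Your proposal is correct and follows essentially the same route as the paper's proof: the same choice of refinement factor $\ell$ making $\ell L/T,\ell l/T$ integral, the same cell-averaging argument combined with Lemma~\ref{lem:mono} to place $(u_N,z_N)$ in the Rajan--Takriti system, and the same construction $z:=u^+$ (justified via the dwell-time switching structure) for the reverse inclusion. The only cosmetic differences are that you verify the redundant bound $z\le\sigma$ explicitly and compute $\ell$ as an lcm rather than a product of denominators, neither of which changes the argument.
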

\begin{proof}
  Again, Condition~(e2) of Definition~\ref{def:extform} as well as
  smallness are easily verified, so it suffices to show
  Condition~(e1) and weak compatibility. So let~$(u,z)\in
  \DM^{\dext}$. We again have
  \[
  (\avn uN,\avn zN)\to (u,z)\text{ for }N\to\infty
  \]
  by Lemma~\ref{lem:approx}.  Now choose~$\ell\in\N$ such
  that~$\nicefrac{\ell L}T$ and~$\nicefrac{\ell l}T$ are both integer
  and consider the subsequence given by indices~$N_k:=k\ell$,
  $k\in\N$. We claim that
  \begin{equation}\label{eq:inext}
    (u_{N_k},z_{N_k})\in
    (U(\nicefrac{N_kL}T,\nicefrac{N_kl}T)_{N_k})^{\dext}~\text{
      for all }k\in\N\;.
  \end{equation}
  First note that~$u_{N_k}\in[0,1]^{N_k}$ and $z_{N_k}\in\R_+^{N_k}$
  by construction. Moreover, Lemma~\ref{lem:mono} again
  yields~$(z_{N_k})_i-(z_{N_k})_{i-1}\ge (u_{N_k})_i-(u_{N_k})_{i-1}$
  and~$(z_{N_k})_i-(z_{N_k})_{i-1}\ge 0$ using~\eqref{eq:v3}. Finally,
  \begin{eqnarray*}
    (z_{N_k})_i-(z_{N_k})_{i-\nicefrac {N_kL}T} & = & \tfrac {N_k}T\int_{(i-1)\frac T{N_k}}^{i\frac T{N_k}} z(t)\,\d t-\tfrac {N_k}T\int_{(i-\nicefrac{N_kL}T-1)\frac T{N_k}}^{(i-\nicefrac{N_kL}T)\frac T{N_k}} z(t)\,\d t\\
    & = & \tfrac {N_k}T\int_{(i-1)\frac T{N_k}}^{i\frac T{N_k}} z(t)\,\d t-\tfrac {N_k}T\int_{(i-1)\frac T{N_k}}^{i\frac T{N_k}} V_{L}(z)(t)\,\d t\\
    & = & \tfrac {N_k}T\int_{(i-1)\frac T{N_k}}^{i\frac T{N_k}} \big(z(t)-V_L(z)(t)\big)\,\d t\\
    & \underset{\eqref{eq:v1}}{\le} & \tfrac {N_k}T\int_{(i-1)\frac T{N_k}}^{i\frac T{N_k}} u(t)\,\d t=(u_{N_k})_i\;,
  \end{eqnarray*}
  and the remaining constraint can be shown analogously
  using~\eqref{eq:v2}. This concludes the proof
  of~\eqref{eq:inext}. The result of Rajan and Takriti now
  shows~$u_{N_k}\in\conv(U(\nicefrac{N_kL}T,\nicefrac{N_kl}T)_{N_k})$
  for all~$k\in\N$, yielding~$\avn u{N_k}\in\conv(\DM)$, which
  implies~$u\in \overline\conv(\DM)$.

  For showing the other inclusion, we may assume~$u\in\DM$ by
  Lemma~\ref{lem:convclosed}. Define~$z:=u^+$. Then we have~$Dz\ge Du$
  and~$Dz\ge 0$. Moreover, the linear inequality~\eqref{eq:v1} follows
  from the definition of~$\DM$ then. Indeed, we
  obtain~$z-V_L(z)=u^+-V_L(u^+)\in\{0,1\}$ a.e.\ from the definition
  of~$\DM$, since~$u$ can switch up at most once in any time interval
  of length~$L$.  If~$u^+-V_L(u^+)=1$ \aei\ some
  interval~$(\tau_1,\tau_2)$, we derive that~$\partial u^+(\{\tau\})=1$ for
  some~$\tau\in(\tau_2-L,\tau_1)$. From the definition of~$\DM$, we
  obtain~$u=1$ \aei~$(\tau_1,\tau_2)$, showing~\eqref{eq:v1}. The
  constraint~\eqref{eq:v2} follows by a similar
  reasoning. Thus~$(u,z)$ satisfies all conditions of~$\DM^{\dext}$,
  which shows that~$u$ belongs to the projection of~$\DM^{\dext}$ to
  the first coordinate.

  From the proof so far, it follows that the extended
  formulation~$\DM^\dext$ is compatible with discretization provided
  that~$N$ is a multiple of~$\ell$. A feasible value for~$\ell$ can be
  computed efficiently from the rational numbers~$L$, $l$, and~$T$,
  e.g., by multiplying the numerator of~$T$ with the denominators
  of~$L$ and~$l$.  We thus obtain that the extended formulation is
  weakly compatible with discretization.
\end{proof}
As seen in the proof of Theorem~\ref{thm:eextended}, the refinement
factor~$\ell$ required to obtain a compatible discretization does not
depend on~$N$, but on the input consisting of~$L$, $\l$, and~$T$. Even
when fixing~$N$, the number of necessary grid cells thus grows by a
factor that is polynomial in these numbers. In terms of~$L$, $\l$,
and~$T$, we thus only obtain a pseudopolynomial algorithm for solving
the refined discretizations. However, at least for the extended
formulation~$\DM^\dext$ considered here, this is unavoidable, since
even Condition~(c2) of Definition~\ref{def:compat} is not satisfied
unless~$\nicefrac TN$ is an integer multiple of~$L$ and~$l$.

\section{Linear Switching Point Constraints}\label{sec:general}

The minimum dwell-time constraints considered in the previous section
form a special case of linear switching point constraints.  Since we
assume~$U$ to be bounded in~$BV_{\star}(0,T;\{0,1\})$, each function~$u\in
U$ is defined by its finitely many switching
points~$t_1,\dots,t_\sigma$, i.e., the points where the value of~$u$
changes; see~\cite[Section~3.2]{partI} for a formal definition. We
assume again that~$u$ starts being switched off and consider the
set~$U(A,b)\subseteq BV_{\star}(0,T;\{0,1\})$ consisting of all
functions~$u$ such that its switching points satisfy given linear
inequalities~$At\le b$ for~$A\in\Q^{m\times\sigma}$
and~$b\in\Q^m$. More precisely, taking into account that functions
in~$L^2_{\star}(0,T)$ are only defined up to null sets, we let~$u$ belong
to~$U(A,b)$ if any only if there exists any representative of~$u$ with
switching points~$t_1,\dots,t_\sigma$ satisfying~$At\le b$.

For simplicity, we assume that the constraints~$At\le b$ imply~$0\le
t_1\le \dots\le t_\sigma\le T$, so that~$u$ switches up at~$t_i$ for
odd~$i$ and down for even~$i$.  However, we do not require that any of
the inequalities~$0\le t_1\le \dots\le t_\sigma\le T$ be strict. It is
thus allowed to switch up immediately at zero, to switch multiple
times at the same time point (so that the switchings neutralize each
other), or to leave some of the switchings to time point~$T$ (so that
they become irrelevant). In particular, while the switching points
uniquely determine~$u\in BV_{\star}(0,T;\{0,1\})$, the vector of switching
points belonging to some given~$u\in BV_{\star}(0,T;\{0,1\})$ may not be
unique.

\subsection{Linearization}

Different from the cases considered in the previous sections, there is
no obvious linear formulation of~$U(A,b)$ in the original space, due
to the non-linear connection between the values of the function~$u$
and the switching points of~$u$. The following model
describing~$U(A,b)$ by the use of additional controls can thus be seen
as a linearization of~$U(A,b)$:
\begin{subequations}\label{eq:switch}
\begin{align}
  U(A,b)^{\dlin}:=\Big\{ & u\in L^2_{\star}(0,T;\{0,1\}),~z^{(1)},\dots,z^{(\sigma)}\in L^2_{\star}(0,T;[0,1])\colon\notag\\
  & Dz^{(i)}\ge 0 \,\quad\quad\text{ for all }i=1,\dots,\sigma,\label{eq:switch:mono}\\
  & z^{(i)}\le z^{(i-1)} \quad\text{ for all }i=2,\dots,\sigma,\label{eq:switch:order}\\
  & u=\sum_{i=1}^\sigma (-1)^{i+1}z^{(i)},\label{eq:switch:connect}\\
  & \sum_{i=1}^\sigma a_{ji}\int_0^T (1-z^{(i)}(t))\,\d t\le b_j\text{ for all }j=1,\dots,m\Big\}\;.\label{eq:switch:constraints}
\end{align}
\end{subequations}
Note that the constraints~\eqref{eq:switch:constraints} fit into the
framework of Condition~(e2) of Definition~\ref{def:extform} by
considering the left and right hand side as constant
functions. However, this formulation contains binarity constraints
on~$u$. Still, we can show
\begin{theorem}\label{thm:lin}
  The projection of~$U(A,b)^{\dlin}$ to the first
  coordinate agrees with~$U(A,b)$.
\end{theorem}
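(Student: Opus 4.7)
My plan is to prove Theorem~\ref{thm:lin} by showing the two set inclusions separately, with the harder one handled via a horizontal (layer-cake) decomposition of the auxiliary functions $z^{(i)}$.

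For the ``easy'' inclusion $U(A,b)\subseteq(\text{projection})$, given $u \in U(A,b)$ with switching points $0 \le t_1 \le \dots \le t_\sigma \le T$ satisfying $At \le b$, I would simply set $z^{(i)} := \chi_{[t_i, T]}$. Then $Dz^{(i)} \ge 0$ is immediate, $z^{(i)} \le z^{(i-1)}$ follows from $t_{i-1} \le t_i$, the identity $u = \sum_i (-1)^{i+1} \chi_{[t_i, T]}$ is the telescoping representation of a binary switching function starting at zero, and $\int_0^T (1 - z^{(i)})\,\d t = t_i$ turns~\eqref{eq:switch:constraints} directly into $\sum_i a_{ji} t_i \le b_j$.

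For the reverse inclusion, given $(u, z^{(1)}, \dots, z^{(\sigma)}) \in U(A,b)^{\dlin}$, I would invoke a layer-cake decomposition. For each $s \in (0, 1]$ and each $i$, define $r_i(s) := \inf\{t \in [0, T] \colon z^{(i)}(t) \ge s\}$, with the convention $r_i(s) := T$ when the set is empty. Each $r_i$ is non-decreasing in $s$ with values in $[0, T]$, and the pointwise ordering $z^{(\sigma)} \le \dots \le z^{(1)}$ yields $r_1(s) \le \dots \le r_\sigma(s)$ for every $s$. The layer-cake identity $z^{(i)}(t) = |\{s \in (0, 1] \colon r_i(s) \le t\}|$ then gives $u(t) = \int_0^1 v_s(t)\,\d s$, where $v_s := \sum_i (-1)^{i+1} \chi_{[r_i(s), T]}$ is the binary switching function with switching-point vector $r(s) := (r_1(s), \dots, r_\sigma(s))$. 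Since both $u$ and each $v_s$ are $\{0,1\}$-valued a.e., expanding $\int_0^1 \int_0^T (u - v_s)^2\,\d t\,\d s$ via $u^2 = u$, $v_s^2 = v_s$ and Fubini shows that it vanishes, so $v_s = u$ almost everywhere in $(s, t)$. In particular $r(s)$ is a valid (extended) switching-point vector of $u$ for almost every $s$; Fubini also gives $\int_0^1 r_i(s)\,\d s = \int_0^T (1 - z^{(i)})\,\d t$, which recasts~\eqref{eq:switch:constraints} as $\int_0^1 a_j^\top r(s)\,\d s \le b_j$ for every $j$.

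The remaining step—and the one I expect to be the main obstacle—is to extract a single $s^\star \in (0, 1]$ for which $a_j^\top r(s^\star) \le b_j$ holds \emph{simultaneously} for all $j = 1, \dots, m$; any such $s^\star$ then certifies $u \in U(A,b)$ via the witness $r(s^\star)$. For $m = 1$ the mean-value principle applied to the scalar average immediately yields such an $s^\star$, but the general multi-row case is delicate because the sets $\{s \colon a_j^\top r(s) \le b_j\}$ guaranteed by averaging need not intersect for free. Here I would exploit the componentwise monotonicity of $r$ in $s$ together with the combinatorial structure of the admissible set of extended switching-point vectors of $u$—a union of low-dimensional faces of the ordered simplex $\{t \in [0, T]^\sigma \colon t_1 \le \dots \le t_\sigma\}$ indexed by the insertions of cancelling ``phantom'' switches into the actual jumps of $u$—to argue that these level sets share a common element.
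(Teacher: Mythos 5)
Your first inclusion is exactly the paper's argument (witness $z^{(i)}=\chi_{(t_i,T)}$). For the reverse inclusion you take a genuinely different route. The paper proves a \emph{rigidity} statement: by a downward induction on $i$, using that $u$ alternates between $0$ and $1$ on the consecutive intervals $(t_{i-1},t_i)$ determined by its own switching points, it shows that \eqref{eq:switch:mono}--\eqref{eq:switch:connect} already force $z^{(i)}=\chi_{(t_i,T)}$, after which \eqref{eq:switch:constraints} literally becomes $At\le b$. In your language that says $r(s)$ is independent of $s$, which would make your extraction step vacuous. Your layer-cake machinery up to that point is correct and nicely executed (both the claim that $v_s=u$ for a.e.\ $s$ and the identity $\int_0^1 r_i(s)\,\d s=\int_0^T(1-z^{(i)}(t))\,\d t$). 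But the step you flag as the main obstacle is a genuine gap, and the intersection argument you sketch cannot close it: the level sets $\{s\mid a_j^\top r(s)\le b_j\}$ really can be pairwise disjoint for a componentwise monotone curve $r$ running through the valid extended switching vectors of $u$.

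Concretely, take $T=2$, $\sigma=3$, $u=\chi_{(1,2)}$ and
\[
z^{(1)}=\tfrac12\chi_{(0,1)}+\chi_{(1,2)},\qquad z^{(2)}=\tfrac12\chi_{(0,2)},\qquad z^{(3)}=\tfrac12\chi_{(1,2)}.
\]
These satisfy \eqref{eq:switch:mono}--\eqref{eq:switch:connect}, and your decomposition gives $r(s)=(0,0,1)$ for $s<\tfrac12$ and $r(s)=(1,2,2)$ for $s>\tfrac12$, both valid extended switching vectors of $u$, with average $\bar r=(\tfrac12,1,\tfrac32)$. Now let $At\le b$ encode $0\le t_1\le t_2\le t_3\le 2$ together with $t_2-t_1\ge\tfrac25$ and $t_3-t_2\ge\tfrac25$. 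Then $\bar r$ satisfies all of these, so \eqref{eq:switch:constraints} holds; yet the two gap constraints are satisfied on the disjoint sets $(\tfrac12,1]$ and $(0,\tfrac12)$ of parameters $s$, and in fact every valid switching vector of $u$ has the form $(c,c,1)$ or $(1,e,e)$ and violates one of them. So no $s^\star$ --- indeed no valid switching vector at all --- can be extracted, and your plan cannot be completed as stated. The way to a proof is to establish the rigidity $z^{(i)}=\chi_{(t_i,T)}$ directly, i.e., that your $r(\cdot)$ is a.e.\ constant. Be aware, though, that the paper's induction needs every interval $(t_{i-1},t_i)$ to be non-empty; the example above lives exactly in the degenerate regime where $u$ has fewer than $\sigma$ genuine switches and the padded entries $t_i=T$ coincide, the $z^{(i)}$ retain extra freedom there, and for the above $A,b$ the function $u$ lies in the projection of $U(A,b)^{\dlin}$ but not in $U(A,b)$. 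Any complete argument must therefore treat that regime separately (or restrict the class of constraints $At\le b$), and your decomposition is actually a good tool for locating exactly where the freedom sits.
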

\begin{proof}
  Given~$u\in U(A,b)$, let~$t_1,\dots,t_\sigma$ be a vector of
  switching points of~$u$ satisfying~$At\le
  b$. Define~$z^{(i)}:=\chi_{(t_i,T)}$ for
  all~$i=1,\dots,\sigma$. Then~$z^{(1)},\dots,z^{(\sigma)}\in
  BV_{\star}(0,T;[0,1])$ and~\eqref{eq:switch:mono}, \eqref{eq:switch:order}, as
  well as~\eqref{eq:switch:connect} are obviously satisfied. Moreover, we
  obtain
  \[
  \textstyle\int_0^T(1-z^{(i)}(t))\,\d t=t_i
  \]
  for all $i=1,\dots,\sigma$ by construction, so
  that~\eqref{eq:switch:constraints} reduces to~$At\le b$. In summary,
  we thus have~$(u,z^{(1)},\dots,z^{(\sigma)})\in U(A,b)^{\dlin}$.
  
  For showing the other direction, assume
  that~$(u,z^{(1)},\dots,z^{(\sigma)})\in
  U(A,b)^{\dlin}$. First note that
  constraint~\eqref{eq:switch:mono}, together with~$z^{(i)}\in [0,1]$
  \aei~$(-\infty,T)$, implies~$|z^{(i)}|_{BV(-\infty,T)}\le 1$, so
  that~$|u|_{BV(-\infty,T)}\le \sigma$ by~\eqref{eq:switch:connect}. From
  the binarity of~$u$, it follows that~$u$ has finitely many switching
  points~$0\le t_1<\dots< t_r< T$ with~$r\le\sigma$. Define~$t_i:=T$
  for~$i=r+1,\dots,\sigma$. It remains to show that~$At\le b$.

  For this, we first show by induction that~$z^{(i)}=0$
  \aei~$(-\infty,t_i)$ for~$i=\sigma,\dots,1$. If~$i$ is even, we
  have~$z^{(j)}=0$ \aei~$(t_{i-1},t_i)$
  for~$j=i+1,\dots,\sigma$ by the induction hypothesis.  So
  from~\eqref{eq:switch:connect} and~\eqref{eq:switch:order} we obtain
  \[
  1=u=\underbrace{z^{(1)}}_{\le 1}\underbrace{-z^{(2)}+z^{(3)}}_{\le
    0}-\dots-\underbrace{z^{(i-2)}+z^{(i-1)}}_{\le 0}-z^{(i)}\le
  1-z^{(i)}\text{ \aei\ }(t_{i-1},t_i)
  \]
  and hence~$z^{(i)}=0$
  \aei~$(t_{i-1},t_i)$. Using~\eqref{eq:switch:mono}, this
  implies~$z^{(i)}=0$ \aei~$(-\infty,t_i)$.  Now let~$i$ be odd. Then
  \[
  0=u=\underbrace{z^{(1)}-z^{(2)}}_{\ge
    0}+\dots+\underbrace{z^{(i-2)}-z^{(i-1)}}_{\ge 0}+z^{(i)}\ge
  z^{(i)}\text{ \aei\ }(t_{i-1},t_i)
  \]
  and hence~$z^{(i)}=0$ \aei~$(t_{i-1},t_i)$, which again
  implies~$z^{(i)}=0$ \aei~$(-\infty,t_i)$ by~\eqref{eq:switch:mono}.

  We next show~$z^{(i)}=\chi_{(t_i,T)}$ inductively
  for~$i=1,\dots,\sigma$. First, let~$i$ be
  odd. By~\eqref{eq:switch:connect}, we have $1=u=z^{(i)}$
  \aei~$(t_{i},t_{i+1})$, since by the induction
  hypothesis~$z^{(j)}=1$ for~$j<i$ and we have already
  shown~$z^{(j)}=0$ for~$j>i$. Thus~$z^{(i)}=\chi_{(t_i,T)}$.
  Similarly, for even~$i$ we obtain~$0=u=1-z^{(i)}$
  on~$(t_{i},t_{i+1})$ and hence again~$z^{(i)}=1$
  \aei~$(t_{i-1},t_i)$, showing again that~$z^{(i)}=\chi_{(t_i,T)}$.
  
  In summary, we have~$\int_0^T(1-z^{(i)}(t))\,\d t=t_i$ for
  all~$i=1,\dots,\sigma$, so that~\eqref{eq:switch:constraints}
  implies~$At\le b$ and hence~$u\in U(A,b)$.
\end{proof}
A closer look at this proof reveals that the main difficulty was to
derive the binarity of the auxiliary
controls~$z^{(1)},\dots,z^{(\sigma)}$,
yielding~$z^{(i)}=\chi_{(t_i,T)}$. If binarity
of~$z^{(1)},\dots,z^{(\sigma)}$ is required directly in the model,
then the above result follows much more easily.

In terms of Definition~\ref{def:extform}, the
set~$U(A,b)^{\textnormal{lin,rel}}$, resulting from
replacing~$\{0,1\}$ by~$[0,1]$ in~$U(A,b)^{\dlin}$, satisfies
Condition~(e2). Moreover, the resulting model is small, assuming
that~$A$ and~$b$ are part of the problem input, and compatible with
discretization, since each~$u\in U(A,b)_N$ can have switching points
only in~$\{0,\tfrac TN,\dots,(N-1)\tfrac TN\}$, so that the
construction in the proof of Theorem~\ref{thm:lin}
yields~$z^{(1)},\dots,z^{(\sigma)}\in L^2_{\star}(0,T)_N$.  Hence, an
obvious question is whether~$U(A,b)^{\textnormal{lin,rel}}$ is an
extended formulation for~$\overline\conv(U(A,b))$, i.e., also
satisfies Condition~(e1).  The answer to this question is negative. In
fact, even for the special case of minimum dwell-time constraints
considered in the previous section, the model~\eqref{eq:switch} does
not yield a complete description.

\begin{example}
To see this, consider the minimum dwell-time instance defined
by~$T=4$, $L=2$, and~$l=0$, so that~\eqref{eq:switch:constraints}
essentially reduces to~$\int_0^T(z^{(1)}(t)-z^{(2)}(t))\,\d t\ge
2$. Let~$u=z^{(1)}-z^{(2)}$ with~$z^{(1)}=\chi_{(1,T)}$
and~$z^{(2)}=\tfrac 12\chi_{(2,T)}$. Then~$(u,z^{(1)},z^{(2)})$
satisfies all constraints in~\eqref{eq:switch} except for the binarity
of~$u$, but~$u\not\in\overline\conv(\DM)$, since all elements of the
latter set satisfy~$u(2\tfrac 14)\ge u(1\tfrac 14)-u(\tfrac 34)$.\qed
\end{example}

However, it is possible to model the constraints~$z-V_L(z)\le u$
and~$z-V_{l}(z)\le {\bf 1}-V_{l}(u)$ of the extended
formulation~$\DM^{\dlin}$ in the variable space of~\eqref{eq:switch}:
using~$z=\sum_{\indodd{i}}z^{(i)}$ and
hence~$z-u=\sum_{\indeven{i}}z^{(i)}$, we obtain
\[
\sum_{\indeven{i}}z^{(i)}\le\sum_{\indodd{i}}V_L(z^{(i)}),\quad
\sum_{\indodd{i}}z^{(i)}\le \mathbf{1}+\sum_{\indeven{i}}
V_{l}(z^{(i)})
\]
as another extended formulation for the case of dwell-time
constraints.

In the following section, we try to convince the reader that a small
extended formulation which is compatible with discretization most
likely does not exist for general linear switching point constraints, even in
the weak sense of Definition~\ref{def:weakextform}.

\subsection{Negative Results}

We now show that a small and compatible extended formulation cannot
exists for general linear switching point constraints unless
P$\,=\,$NP. Using Theorem~\ref{thm:extended}, it suffices to show that
it is NP-complete to decide whether~$U(A,b)_N\neq\emptyset$ for
given~$A$ and~$b$.  For all hardness proofs, we use reductions from
the following elementary decision problem:
\begin{center}
  (BPF) ~ Given~$B\in\Q^{m\times n}$ and~$d\in\Q^m$, does there exist some~$x\in\{0,1\}^n$ with $Bx\le d$?
\end{center}
It is well-known that~(BPF) is NP-complete~\cite{karp72}. We first show
\begin{theorem}\label{thm:hard1}
  Assume that~$T\in\Q_+$, $\sigma,m\in\N$, $A\in\Q^{m\times\sigma}$,
  $b\in\Q^m$, and~$N\in\N$ are part of the input. Then it is
  NP-complete to decide whether~$U(A,b)_N\neq\emptyset$.
\end{theorem}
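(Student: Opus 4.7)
The plan is to verify NP membership by using as a witness the pair $(u,t)$, where $u\in\{0,1\}^N$ encodes the piecewise constant values of the candidate control and $t\in\Q^\sigma$ is a corresponding switching-point vector: one checks in polynomial time that $At\le b$ and that $u$ is the binary function prescribed by $t$. For NP hardness I would reduce from (BPF).

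Given $B\in\Q^{m\times n}$ and $d\in\Q^m$, set $T:=n$, $N:=n$, and $\sigma:=2n$, so that the grid consists of the integer points $0,1,\dots,n$. I would assign one \emph{slot} $[i-1,i]$ to each binary variable $x_i$ by including in $At\le b$ the inequalities
\begin{equation*}
  t_{2i-1}\ge i-1,\quad t_{2i}\le i,\quad t_{2i-1}\le t_{2i}\qquad(i=1,\dots,n),
\end{equation*}
noting that the ambient ordering $0\le t_1\le\cdots\le t_{2n}\le T$ then comes for free, since $t_{2i}\le i\le t_{2i+1}$. Combined with the piecewise constancy enforced on each slot by $N=n$, these slot inequalities restrict the pair $(t_{2i-1},t_{2i})$ to one of $(i-1,i-1)$, $(i-1,i)$, $(i,i)$, so that the pulse width $t_{2i}-t_{2i-1}$ lies in $\{0,1\}$ and serves as the binary variable $x_i$. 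The BPF constraints $Bx\le d$ are encoded by the $m$ additional linear constraints
\begin{equation*}
  \sum_{i=1}^n B_{ji}\bigl(t_{2i}-t_{2i-1}\bigr)\le d_j\qquad(j=1,\dots,m).
\end{equation*}

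To establish the equivalence $U(A,b)_N\neq\emptyset\iff\exists\,x\in\{0,1\}^n\text{ with }Bx\le d$, I would argue both directions. Given $x\in\{0,1\}^n$ with $Bx\le d$, the explicit choice $t_{2i-1}:=i-1$, $t_{2i}:=i-1+x_i$ produces a grid-aligned switching-point vector satisfying all our inequalities, and the induced $u$ is piecewise constant on the $N$-grid. Conversely, every $u\in U(A,b)_N$ admits some switching-point vector $t$ with $At\le b$; the key step is to observe that, within each slot $[i-1,i]$, the restriction of $u$ must be constant (as the grid has no interior nodes there), so the pair $(t_{2i-1},t_{2i})$ either satisfies $t_{2i-1}=t_{2i}$ (a canceling pair, of pulse width $0$) or fills the entire slot, i.e.\ $t_{2i-1}=i-1$ and $t_{2i}=i$ (of pulse width $1$); any intermediate configuration would force a switch of $u$ inside the slot, contradicting piecewise constancy. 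The pulse widths therefore define a binary $x$, and the final family of constraints yields $Bx\le d$. The only delicate point I expect is this last grid-based case analysis---in particular, ruling out off-grid pulses such as $(t_{2i-1},t_{2i})=(i-1,i-\tfrac12)$ by exploiting discretization---while all remaining verifications are routine.
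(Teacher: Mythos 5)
Your proof is correct and follows essentially the same route as the paper: a reduction from (BPF) with $T=N=n$, $\sigma=2n$, one unit slot per binary variable, and the observation that piecewise constancy on the $n$-cell grid forces each slot's switching pattern to be binary. The only (harmless) variation is that the paper pins the up-switch at $t_{2i-1}=i-1$ and encodes $x_i$ as $t_{2i}-(i-1)$, whereas you let the pulse float in its slot and encode $x_i$ as the pulse width $t_{2i}-t_{2i-1}$, which costs you the small extra case analysis you correctly carry out.
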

\begin{proof}
  We show the statement by reduction from (BPF). For this, we
  set~$T=n$ and~$\sigma=2n$. For all~$i=1,\dots,n$, we add the
  switching point constraints~$t_{2i-1}=i-1$ and $i-1\le t_{2i}\le
  i$. In words, the control~$u$ switches up at~0, then down again
  between~0 and~1, up again at~1, and so on. So far, all these
  switchings are independent. We will model the variable~$x_i$
  by~$t_{2i}-(i-1)\in[0,1]$. Substituting all~$x_i$ in~$Bx\le d$ by
  these expressions, we obtain another set of linear constraints
  in~$t$, which we add to the switching point constraints. This
  concludes the construction of~$A$ and~$b$, which can obviously be
  done in polynomial time.

  Now let~$N=n$. Then all switching points of~$u$ belong
  to~$\{0,\dots,n\}$, hence~$t_{2i}-(i-1)\in\{0,1\}$
  and~$t_{2i}-(i-1)=u_{i}$. Thus, by construction, the given instance
  of~(BPF) has a feasible solution if and only if~$U(A,b)_N\neq
  \emptyset$. Clearly, the problem of deciding whether~$U(A,b)_N$ is
  non-empty belongs to NP, the certificate being an element
  of~$U(A,b)_N$.
\end{proof}
Now using Theorem~\ref{thm:hard1} and
Theorem~\ref{thm:extended}, considering, e.g, the objective
function~$c=\mathbf{0}$, we immediately obtain
\begin{corollary}
  A small extended formulation of~$U(A,b)$ that is compatible with
  discretization does not exist unless~P$\,=\,$NP.
\end{corollary}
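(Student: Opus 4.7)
The plan is to argue by contraposition, chaining Theorem~\ref{thm:extended} with the NP-completeness result of Theorem~\ref{thm:hard1}. Suppose, toward a contradiction, that for the class $U(A,b)$ there were a small extended formulation $U(A,b)^{\dext}$ that is compatible with discretization in the sense of Definition~\ref{def:compat}.

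I would then invoke Theorem~\ref{thm:extended} with the specific, trivial cost function $c=\mathbf 0\in L^2_{\star}(0,T)$, whose piecewise averages $c_N=0$ are computable in constant time for every $N\in\N$. The theorem guarantees that the resulting linear program, obtained by writing the constraints of $(U(A,b)^{\dext})_N$ in the coordinates $\R^{(d+1)N}$, has size polynomial in the input of the instance together with $N$, and moreover this LP can be solved in polynomial time. By Condition~(c1), the projection of $(U(A,b)^{\dext})_N$ to the $u$-coordinate agrees with $\conv(U(A,b)_N)$, so the LP is feasible if and only if $\conv(U(A,b)_N)\neq\emptyset$, i.e., if and only if $U(A,b)_N\neq\emptyset$.

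Hence, from the hypothetical small and compatible extended formulation one could decide the non-emptiness of $U(A,b)_N$ in polynomial time in the combined input consisting of $T$, $\sigma$, $A$, $b$, and $N$. But Theorem~\ref{thm:hard1} shows that this decision problem is NP-complete, so the existence of such an algorithm would imply P$\,=\,$NP, yielding the claim. There is no real obstacle here beyond being explicit about which LP one is solving and why feasibility of that LP is equivalent to the emptiness question of Theorem~\ref{thm:hard1}; the whole argument is a direct combination of the two preceding theorems.
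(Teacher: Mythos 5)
Your argument is exactly the paper's: the corollary is obtained there by combining Theorem~\ref{thm:extended} (instantiated with the objective $c=\mathbf 0$) with the NP-completeness of deciding $U(A,b)_N\neq\emptyset$ from Theorem~\ref{thm:hard1}. Your write-up just makes explicit the chain of implications that the paper leaves as ``we immediately obtain,'' so it is correct and takes essentially the same route.
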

The proof of Theorem~\ref{thm:hard1} relies on the possibility to
choose the grid size~$N$ in the reduction, i.e., on the fact that~$N$
is part of the input. When keeping the same instance but considering
finer grids, it is no longer true that~$u$ must be constant
between~$i-1$ and~$i$. Indeed, assuming that~$N$ is a multiple of~$T$,
the function~$u$ can switch at any point~$(i-1)+j\tfrac TN$
for~$j=0,\dots,\tfrac NT$. Thus~$t_{2i}-(i-1)$ is no longer binary,
but can take any value in~$\{j\tfrac TN\mid j=0,\dots,\tfrac
NT\}$. Hence, for large enough~$N$, any vertex of the polytope~$\{x\in
[0,1]^n\colon Bx\le d\}$ can be represented by~$x_i=t_{2i}-(i-1)$
since~$A$ and~$b$ are rational, so that the decision
whether~$U(A,b)_N\neq \emptyset$ reduces to deciding feasibility of a
linear program, which can be done in polynomial time. In other words,
Theorem~\ref{thm:hard1} does not rule out the existence of a small
extended formulation for~$U(A,b)$ that is only \emph{weakly}
compatible with discretization.

\begin{example}\label{ex:vc}
  It can happen that different discretizations of the same problem are
  tractable or NP-hard depending on the choice of~$N$, and the two
  situations may even alternate. As an example, consider the following
  fractional version of the vertex cover problem: given a simple
  graph~$G=(V,E)$, $\gamma\in\N$ and~$K\in\Q$, decide whether there exists a
  solution~$x\in\R^V$ of
  \begin{equation*}
    \begin{array}{rcll}
      \sum_{v\in V}x_v & \le & K\\
      x_v+x_w & \ge & 1 & \forall (v,w)\in E\\
      x_v & \in & [0,1] & \forall v\in V
    \end{array}
  \end{equation*}
  such that all entries of~$x$ are integer multiples of~$\nicefrac
  1\gamma$. Theorem~\ref{thm:hard1} shows that this problem can be
  reduced to deciding whether~$U(A,b)_N=\emptyset$ with~$N=\gamma n$,
  for appropriate~$A$ and~$b$. Since the vertex cover polytope is
  half-integral, meaning that all vertices have entries being
  multiples of~$\nicefrac 12$, the above problem reduces to a linear
  program for even~$\gamma$. For the problem constructed in the proof
  of Theorem~\ref{thm:hard1}, it can thus be decided in polynomial
  time whether~$U(A,b)_N\neq\emptyset$ whenever~$N$ is an even
  multiple of~$n$. However, the same decision problem turns
  NP-complete when~$N$ is an odd multiple of~$n$. For this, it
  suffices to show that the above fractional vertex cover problem is
  NP-complete for all odd values of~$\gamma$, which is done in
  Appendix~\ref{app:A}.
  \qed
\end{example}
However, by adding an objective function to the problem, we can show
that even a weakly compatible extended formulation for~$U(A,b)$ cannot
exist in general.
\begin{theorem}\label{thm:noweak}
  A small extended formulation for~$U(A,b)$ that is weakly
  compatible with discretization cannot exists unless~P$\,=\,$NP.
\end{theorem}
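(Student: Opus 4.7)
The plan is to augment the reduction used in the proof of Theorem~\ref{thm:hard1} with a carefully chosen linear objective that keeps the problem NP-hard under every grid the would-be algorithm could pick. By combining Theorem~\ref{thm:extended} with Definition~\ref{def:weakextform}, a small and weakly compatible extended formulation would, for every~$M\in\N$, produce in polynomial time a refinement~$N=\ell M$ with~$\ell$ polynomial in~$M$, and then minimize~$\int_0^T c(t)u(t)\,\d t$ over~$U(A,b)_N$ in polynomial time in the input and in~$M$. Fixing~$M:=n$ (the size of the hidden (BPF) instance), I aim to design a single triple~$(A,b,c)$ for which, regardless of the polynomial~$\ell$ chosen by the algorithm, the minimum over~$U(A,b)_{\ell n}$ still decides (BPF).

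For the switching point constraints I would reuse the encoding of Theorem~\ref{thm:hard1}: $T:=n$, $\sigma:=2n$, $t_{2i-1}=i-1$, $t_{2i}\in[i-1,i]$, together with~$Bx\le d$ rewritten via the substitution~$x_i:=t_{2i}-(i-1)$. The new ingredient is the objective. Setting~$\varepsilon:=2^{-n}$, I would let~$c\in L^2_{\star}(0,T)$ equal~$\nicefrac{1}{\varepsilon}$ on each interval~$(i-1,i-1+\varepsilon)$, equal~$-\nicefrac{1}{\varepsilon}$ on each interval~$(i-\varepsilon,i)$, and vanish elsewhere. Both~$\varepsilon$ and the spike positions have polynomial-size descriptions, and from them the piecewise averages~$c_N$ can be generated in polynomial time for any~$N$.

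Writing $C(t):=\int_0^t c(s)\,\d s$, the objective evaluates to~$\int_0^T cu\,\d t=\sum_{i=1}^n[C(t_{2i})-C(i-1)]$. Since~$\ell$ is polynomial in~$M=n$, we have~$\varepsilon<\nicefrac 1\ell=\nicefrac TN$ once~$n$ is large enough, so each spike fits into a single grid cell. A short calculation then yields~$C(t_{2i})-C(i-1)=0$ whenever~$x_i\in\{0,1\}$ (both spikes are either skipped or cancel) and~$=1$ whenever~$x_i$ is a strictly fractional multiple of~$\nicefrac 1\ell$ (only the positive spike is crossed). Consequently the minimum equals~$0$ if and only if there exists~$x\in\{0,1\}^n$ with~$Bx\le d$, so Theorem~\ref{thm:extended} would decide (BPF) in polynomial time, implying P$\,=\,$NP.

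The hard part of this argument is that~$c$ has to be specified once, before knowing which refinement~$\ell$ the purported algorithm will pick, yet the discretized objective~$c_N$ must separate binary from fractional switching values on every admissible~$N$. The resolution exploits the inherent gap between weak compatibility (which forces~$\ell$ to be polynomial in~$n$) and the exponential smallness of~$\varepsilon=2^{-n}$: for every such polynomial~$\ell$ and all sufficiently large~$n$, the spikes are narrower than the grid cells, so the piecewise average~$c_N$ always has the intended binarity-enforcing shape. A minor additional check is that~$c\in L^2_{\star}(0,T)$, which is immediate since~$\|c\|_{L^2}^2=2n/\varepsilon<\infty$.
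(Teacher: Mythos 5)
Your proposal is correct and follows the same overall strategy as the paper's proof: reuse the switching-point encoding of Theorem~\ref{thm:hard1} verbatim and add a fixed objective~$c$ whose minimum over~$U(A,b)_N$ is zero precisely when the hidden (BPF) instance has a binary solution, then invoke Theorem~\ref{thm:extended} together with Definition~\ref{def:weakextform}. The only real difference is the choice of~$c$. The paper takes the sawtooth~$c(t)=\tfrac 12-(t-\lfloor t\rfloor)$, which gives each coordinate the contribution~$\tfrac 12 x_i(1-x_i)$ with~$x_i=t_{2i}-(i-1)$; this vanishes exactly at~$x_i\in\{0,1\}$ for \emph{every} grid~$N$ that is a multiple of~$n$, with no further conditions. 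Your spike construction with~$\varepsilon=2^{-n}$ achieves the same separation (contribution~$0$ versus~$1$), but only after the additional argument that~$\varepsilon<\nicefrac 1\ell$ for all sufficiently large~$n$ because~$\ell$ is polynomially bounded, plus a brute-force fallback for small~$n$; these steps are sound but make the reduction depend on the polynomial bound for~$\ell$ in a way the paper's objective avoids. A cosmetic point: your~$c$ has~$\|c\|_{L^2}^2=2n\cdot 2^n$, which is harmless since only the discretized averages~$c_N$ (entries~$0,\pm\ell$) enter the algorithm, but it is worth stating explicitly that Theorem~\ref{thm:extended} takes~$c$ via~$c_N$, so only these averages need to be of polynomial size.
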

\begin{proof}
  Consider the problem
  \begin{equation}\label{eq:e}
    \left\{\quad\begin{aligned}
    \min~~~ & \int_0^T c(t) u(t)\;\d t\\
    \text{s.t.}~~~ & u\in U(A,b)
    \end{aligned}\right.
  \end{equation}
  for the function~$c\in L^2(0,T)$ defined by~$c(t)=\tfrac 12-(t-\lfloor
  t\rfloor)$. Then~$c_N$ can be computed efficiently for each~$N$ and
  the discretized problem reads
  \begin{equation}\label{eq:en}
    \left\{\quad\begin{aligned}
    \min~~~ & \tfrac 1N\sum_{i=1}^N (c_N)_i u_i\\
    \text{s.t.}~~~ & u\in\R^N,~\overline u\in U_N(A,b)\;.
    \end{aligned}\right.
  \end{equation}
  It suffices to show that (BPF) can be polynomially reduced to
  Problem~\eqref{eq:en} for~$N=\ell(n)n$ whenever~$\ell(n)$ is
  polynomial in~$n$. Indeed, under the assumption that a small and
  weakly compatible extended formulation exists, this yields an
  efficient algorithm for deciding~(BPF) as follows: First,
  choose~$M=n$ and efficiently compute some~$\ell$ as in
  Definition~\ref{def:compat}. Since~$\ell$ is required to be
  polynomial in~$M$, we would then have that~(BPF) can be polynomially
  reduced to Problem~\eqref{eq:en} for~$N=\ell M$, and by the
  compatibility assumption, Problem~\eqref{eq:en} can be solved in
  polynomial time for this~$N$. This implies that (BPF) can be solved
  in polynomial time and hence~P$\,=\,$NP.

  In order to construct the desired polynomial reduction, let an
  instance of~(BPF) be given by~$B\in\Q^{m\times n}$ and~$d\in\Q^m$.
  We define~$T$, $\sigma$, and the switching point constraints exactly
  as in the first part of the proof of Theorem~\ref{thm:hard1}. We now
  claim that the given instance of~(BPF) is feasible if and only if
  the constructed instance of Problem~\eqref{eq:en} for~$N=\ell(n)n$
  has an optimal value of zero. For this, the objective value
  of~$u\in\R^N$ with~$\overline u\in U_N(A,b)$ can be computed as
  follows: for~$i=1,\dots,n$, we have
  \begin{eqnarray*}
    \sum_{j=1}^{\ell(n)} (c_{N})_{(i-1)\ell(n)+j}u_{(i-1)\ell(n)+j}
    & = & \sum_{j=1}^{(t_{2i}-(i-1))\ell(n)}(c_{N})_{(i-1)\ell(n)+j}\\
    & = & \int_{i-1}^{t_{2i}} c(t)\,\d t\\
    & = & \int_{0}^{t_{2i}-(i-1)} (\tfrac 12-t)\,\d t\\
    & = & \tfrac 12 \big(t_{2i}-(i-1)\big)\big(1-(t_{2i}-(i-1))\big)\;,
  \end{eqnarray*}
  so that
  \begin{eqnarray*}
    c_N^\top u & = & \sum_{i=1}^n\sum_{j=1}^{\ell(n)} (c_{N})_{(i-1)\ell(n)+j}u_{(i-1)\ell(n)+j}\\
    & = & \tfrac 12\sum_{i=1}^n \big(t_{2i}-(i-1)\big)\big(1-(t_{2i}-(i-1))\big)\;.
  \end{eqnarray*}
  The latter expression is always non-negative,
  since~$t_{2i}-(i-1)\in[0,1]$ for all~$i=1,\dots,n$. It follows that
  all~$u\in U(A,b)_N$ have a non-negative objective value in the
  constructed instance, and the objective value is zero if and only
  if~$t_{2i}-(i-1)\in\{0,1\}$ for all~$i=1,\dots,n$. This concludes
  the proof.
\end{proof}
A closer look at the proof of Theorem~\ref{thm:noweak} reveals that the
difficulty of linear optimization over~$U(A,b)_N$ does \emph{not} stem
from the binarity of the switch~$u$, but from the non-convex relation
between the switching points of~$u$ and the value of~$u$ at a given
point~$t\in[0,T]$. This however does not rule out the existence of
small and compatible extended formulations in special cases, as shown by
Theorem~\ref{thm:eextended}.

\section*{Conflicts of Interest}

The author declares that he has no conflicts of interest.

\bibliographystyle{plain}
\bibliography{reference}

\begin{thebibliography}{10}

\bibitem{ALT16}
Hans~Wilhelm Alt.
\newblock {\em Linear functional analysis: an application-oriented
  introduction}.
\newblock Springer, 2016.

\bibitem{ATT14}
Hedy Attouch, Giuseppe Buttazzo, and G\'erard Michaille.
\newblock {\em Variational Analysis in Sobolev and {BV} Spaces}.
\newblock Society for Industrial and Applied Mathematics, 2014.

\bibitem{BFR18}
Pascale Bendotti, Pierre Fouilhoux, and C{\'e}cile Rottner.
\newblock The min-up/min-down unit commitment polytope.
\newblock {\em Journal of Combinatorial Optimization}, 36:1024--1058, 2018.

\bibitem{Bre11}
Haim Brezis.
\newblock {\em Functional analysis, Sobolev spaces and partial differential
  equations}, volume~2.
\newblock Springer, 2011.

\bibitem{partI}
Christoph Buchheim, Alexandra Gr\"utering, and Christian Meyer.
\newblock Parabolic optimal control problems with combinatorial switching
  constraints -- {Part I}: Convex relaxations.
\newblock {\em SIAM Journal on Optimization}.
\newblock To appear.

\bibitem{partII}
Christoph Buchheim, Alexandra Gr\"utering, and Christian Meyer.
\newblock Parabolic optimal control problems with combinatorial switching
  constraints -- {Part II}: Outer approximation algorithm.
\newblock {\em SIAM Journal on Optimization}.
\newblock To appear.

\bibitem{partIII}
Christoph Buchheim, Alexandra Gr\"utering, and Christian Meyer.
\newblock Parabolic optimal control problems with combinatorial switching
  constraints -- {Part III}: Branch-and-bound algorithm.
\newblock Technical report, 2024.

\bibitem{maja}
Christoph Buchheim and Maja Hügging.
\newblock The polytope of binary sequences with bounded variation.
\newblock {\em Discrete Optimization}, 48:100776, 2023.

\bibitem{CCZ13}
Michele Conforti, Gérard Cornuéjols, and Giacomo Zambelli.
\newblock Extended formulations in combinatorial optimization.
\newblock {\em Annals of Operations Research}, 204:97--143, 2013.

\bibitem{karp72}
Richard~M. Karp.
\newblock {\em Complexity of Computer Computations}, chapter Reducibility among
  Combinatorial Problems, pages 85--103.
\newblock The IBM Research Symposia Series. Springer, 1972.

\bibitem{lee04}
Jon Lee, Janny Leung, and Fran\c{c}ois Margot.
\newblock Min-up/min-down polytopes.
\newblock {\em Discrete Optimization}, 1:77--85, 2004.

\bibitem{RajanTakriti}
Deepak Rajan and Samer Takriti.
\newblock Minimum up/down polytopes of the unit commitment problem with
  start-up costs.
\newblock Technical Report RC23628, {IBM} Research Report, 2005.

\bibitem{KMS11}
Sebastian Sager, Michael Jung, and Christian Kirches.
\newblock Combinatorial integral approximation.
\newblock {\em Mathematical Methods of Operations Research}, 73(3):363--380,
  2011.

\bibitem{ZS20}
Sebastian Sager and Clemens Zeile.
\newblock On mixed-integer optimal control with constrained total variation of
  the integer control.
\newblock {\em Computational Optimization and Applications}, 78(2):575--623,
  2021.

\bibitem{ZRS20}
Clemens Zeile, Nicol{\`o} Robuschi, and Sebastian Sager.
\newblock Mixed-integer optimal control under minimum dwell time constraints.
\newblock {\em Mathematical Programming}, pages 1--42, 2020.

\end{thebibliography}

\appendix

\section{Fractional vertex cover}\label{app:A}

We claim that for odd~$\gamma\in\N$, the following decision problem is
NP-complete: given a simple graph~$G=(V,E)$ and~$K\in\Q$, decide
whether there exists a solution~$x\in\Q^V$ of
\begin{equation}\tag{$\gamma$-VC}\label{eq:fvc}
  \left\{\quad
  \begin{array}{rcll}
    \sum_{v\in V}x_v & \le & K\\[1ex]
    x_v+x_w & \ge & 1 & \forall (v,w)\in E\\[1ex]
    x_v & \in & [0,1] & \forall v\in V\\[1ex]
    x_v & \in & \tfrac 1\gamma \Z & \forall v\in V\;.
  \end{array}
  \right.
\end{equation}
To show this claim, we reduce the NP-complete decision variant of the
(ordinary) vertex cover problem to the above problem. So given an
instance of the vertex cover problem, i.e., a graph~$G'=(V',E')$
and~$K'\in\N$, we construct~$G=(V,E)$ from~$G'$ as follows: for
each~$v\in V'$, we add three new vertices~$v^{(1)},v^{(2)},v^{(3)}$
and four new edges
\[
(v,v^{(1)}),(v^{(1)},v^{(2)}),(v^{(2)},v^{(3)}),(v^{(3)},v^{(1)})\;.
\]
In words, we add a triangle~$T_v$ for each vertex~$v\in V'$ and
connect it to~$v$ by a bridge. We now claim that~$G'$ has a vertex
cover of size at most~$K'$ if and only if~\eqref{eq:fvc} has a
solution for~$G$ and~$K:=\tfrac 1\gamma K'+2|V'|$.

First assume that~$S\subseteq V'$ is a vertex cover of~$G'$
with~$|S|\le K'$. Then the following vector is a solution
of~\eqref{eq:fvc}: set~$x_v=\tfrac{\gamma+1}{2\gamma}$ for~$v\in
S\cup\{v^{(1)},v^{(2)}\mid v\in V'\}$
and~$x_v=\tfrac{\gamma-1}{2\gamma}$ otherwise. Indeed, it is easy to
verify that~$x$ satisfies the covering constraints. For the
cardinality constraint, we have
\[
\sum_{v\in V}
x_v=\tfrac{\gamma+1}{2\gamma}(|S|+2|V'|)+\tfrac{\gamma-1}{2\gamma}(|V'|-|S|+|V'|)=\tfrac
1\gamma |S|+2|V'|\le K\;.
\]
For the other direction, let~$x\in\Q^V$ solve~\eqref{eq:fvc}. We may
assume~$x_{v^{(1)}}=\tfrac{\gamma+1}{2\gamma}$ for all~$v\in
V'$. Indeed, using a smaller value does not allow to decrease the
costs of covering~$T_v$ while contributing less to
cover~$(v,v^{(1)})$. On the other hand, using a larger value increases
the costs of covering~$T_v$ by at least as much as it would cost to
increase~$x_v$ instead. So we can assume~$x_v\ge
\tfrac{\gamma-1}{2\gamma}$ for all~$v\in V$ now. In particular, it
suffices to choose~$x_v\le \tfrac{\gamma+1}{2\gamma}$ to cover all
edges in~$E$, hence
\[
x_v\in\{\tfrac{\gamma-1}{2\gamma},\tfrac{\gamma+1}{2\gamma}\}\quad\forall
v\in V'
\]
without loss of generality. Then it is easy to verify that the set
$S:=\{v\in V\mid x_v=\tfrac{\gamma+1}{2\gamma}\}$ is a vertex cover
of~$G'$ of size at most~$K'$.\qed

\end{document}